\theoremstyle{plain}
\numberwithin{equation}{section}
\newtheorem{theorem}{Theorem}
\newtheorem{corollary}{Corollary}
\newtheorem{lemma}{Lemma}
\numberwithin{lemma}{section}
\numberwithin{corollary}{section}
\numberwithin{proposition}{section}
\theoremstyle{definition}
\newtheorem{definition}{Definition}
\numberwithin{definition}{section}
\theoremstyle{remark}
\newtheorem{example}{Example}
\numberwithin{example}{section}
\newtheorem{remark}{Remark}
\newcommand{\nc}{\newcommand}
\nc{\C}{\mathcal{C}}
\nc{\CC}{\widetilde{C}}
\nc{\wt}{\overline}
\nc{\mc}{\mathcal}
\nc{\on}{\operatorname}
\nc{\Iso}{\on{Iso}}
\nc{\m}{\mathbf{m}}
\nc{\N}{\on{N}}
\nc{\Nm}{\on{N}_{\m}}
\nc{\Cm}{\on{C}_{\m}}
\nc{\Nil}{\mathbf{Nil}}
\nc{\Perm}{\mathbf{Perm}}
\nc{\U}{\on{U}}
\nc{\Q}{\on{Q}}
\nc{\I}{\mathbf{I}}
\nc{\E}{\mathbf{E}}
\nc{\Ii}{\mathbb{I}}
\nc{\n}{\mathfrak{n}}
\nc{\g}{\mathfrak{g}}
\nc{\h}{\mathfrak{h}}
\renewcommand{\b}{\mathfrak{b}}
\nc{\HQ}{\mathbf{H}_{\Q}}
\nc{\HQe}{\widetilde{\mathbf{H}}_{\Q}}
\nc{\Fun}{\mathbb{F}_1}
\nc{\VFun}{\on{Vect}(\Fun)}
\nc{\RepQ}{\on{Rep}(\Q, \Fun)}
\nc{\RepQnot}{\on{Rep}(\Q_0, \Fun)_{nil}}
\nc{\kk}{\on{K}}
\nc{\RepQn}{\RepQ_{nil}}
\nc{\V}{\mathbb{V}}
\nc{\W}{\mathbb{W}}
\nc{\Z}{\mathbb{Z}}
\renewcommand{\dim}{\on{dim}}
\nc{\dimvv}{\underline{\dim}}
\renewcommand{\k}{\mathbf{k}}
\nc{\znz}{\mathbb{Z}/ n \mathbb{Z}}
\nc{\agln}{\hat{\mathfrak{gl}}_n}
\renewcommand{\a}{\mathfrak{a}}
\nc{\CQ}{\mathbf{C}_{\Q}}
\nc{\CQe}{\widetilde{\CQ}}
\begin{document}

\title{Representations of quivers over $\mathbb{F}_1$ and Hall algebras}
\author{Matt Szczesny}
\address{Department of Mathematics  
         Boston University, Boston MA, USA}
\email{szczesny@math.bu.edu}

\thanks{The author was supported by an NSA grant.}


\begin{abstract}

We define and study the category $\RepQ$ of representations of a quiver in $\VFun$ - the category of vector spaces "over $\Fun$". $\RepQ$ is an $\Fun$--linear category possessing kernels, co-kernels, and direct sums. Moreover, $\RepQ$ satisfies analogues of the Jordan-H\"older and Krull-Schmidt theorems. We are thus able to define the Hall algebra $\HQ$ of $\RepQ$, which behaves in some ways like the specialization at $q=1$ of the Hall algebra of $\on{Rep}(\Q, \mathbf{F}_q)$. We prove the existence of a Hopf algebra homomorphism of $ \rho': \U(\n_+) \rightarrow \HQ$, from the enveloping algebra of the nilpotent part $\n_+$ of the Kac-Moody algebra with Dynkin diagram $\overline{\Q}$ - the underlying unoriented graph of $\Q$.  We study $\rho'$ when $\Q$ is the Jordan quiver, a quiver of type $A$, the cyclic quiver, and a tree respectively. 

\end{abstract}

\maketitle 

\section{introduction}

Several recent papers \cite{CC1, CC2, CCM, D, D2, Du, H, LPL, Sou, TV} have been devoted to the development of algebraic geometry over $\Fun$ - the mystical field of one element. While this theory is still under development, there is agreement about what the definition of certain basic objects should be. In particular $\VFun$ - the category of vector spaces "over $\Fun$" is defined as follows. The objects of $\VFun$ are  pointed sets $(V, 0_V)$ and the morphisms $f : V \rightarrow W $ maps of pointed sets, having the additional property that $ f \vert_{V \backslash f^{-1}(O_W)}$ is an injection (such maps are also called \emph{partial bijections} in the literature). One hint that this is the correct definition comes from the phenomenon that linear algebra over finite fields $\mathbb{F}_q$ reduces in the limit $q \rightarrow 1$ to the combinatorics of finite sets. An example of this principle is the following. It is well-known that the Grassmannian  $Gr(k,n)$ of $k-planes$ in $\mathbb{F}^n_q$ has $\frac{[n]_q !}{[n-k]_q ! [k]_q !}$ points over $\mathbb{F}_q$, where
\[
[n]_q ! = [n]_q [n-1]_q \ldots [2]_q
\]
and 
\[
[n]_q = 1 + q + q^2 + \ldots + q^{n-1}
\]
In the limit $q \rightarrow 1$ this reduces to the binomial coefficient $\binom{n}{k}$, counting $k$--element subsets of an $n$--element set. 

The category $\VFun$ has almost all the good properties of the category of vector spaces over a field. All morphisms have kernels, cokernels, and transposes (duals),  it possesses a zero object, and has analogues of direct sum (pointed disjoint union) and tensor product (pointed Cartesian product). $\on{GL}(V)$ becomes the symmetric group.  The essential difference between $\VFun$ and an Abelian category is that $\on{Hom}(V,W)$ has no additive structure. It is simply a pointed set - i.e. itself an object in $\VFun$. 

This paper develops the representation theory of quivers in $\VFun$.  Given a quiver $\Q$ with vertex set $\I$ and edge set $\E$ we define a representation of $\Q$ in $\VFun$ to be an assignment of an object $V_i \in \VFun $ to each $i \in \I$, and a map $f_e : V_{e'} \rightarrow V _{e''}$ to each edge $e \in \E$ joining $e' \in \I$ to $e'' \in \I$. The category $\RepQ$ inherits most of the good properties of $\VFun$ - we have a zero object, kernels, cokernels, and direct sums. Moreover, the Jordan-H\"older and Krull-Schmidt theorems hold in $\RepQ$. $\RepQ$ therefore has enough structure to define its Hall algebra $\HQ$. As a vector space, $\HQ$ is defined to be the space of finitely supported functions on isomorphism classes of $\RepQ$ (denoted $\on{Iso}(\Q)$), i.e.
\[
\HQ := \{ f: \on{Iso}(\Q) : \rightarrow \mathbb{\mathbb{C}} \vert  \# (\on{supp}(f) ) < \infty \}
\]
It is equipped with an associative convolution product defined by 
\begin{equation} \label{prod1}
 (f \dot g)(M) = \sum_{ L \subset M} f(M/L) g(L) \; \; \; \;  M, L \in \on{Iso}(\Q)  
\end{equation}
This product encodes the structure of extensions in the category $\RepQ$. If we denote by $[M]$ the delta-function supported on the isomorphism class $M \in \on{Iso}(\Q)$, then \ref{prod1} can be written more explicitly as
\[
[M] \dot [N] = \sum_{R \in \on{Iso}(Q)} \frac{\mathbf{P}^{R}_{M,N}}{a_M a_N} [R]
\]
where $$\mathbf{P}^{R}_{M,N} = \# \{ \textrm{ short exact sequences } \mathbb{O} \rightarrow N \rightarrow R \rightarrow M \rightarrow \mathbb{O} \}$$
and $a_{M} = \# \on{Aut}(M)$.  
$\HQ$ carries a grading by the effective cone  $$K^+_0 (\RepQ) \simeq \mathbb{Z}_{\geq 0}^{|\I|} \subset K_0 (\RepQ),$$ by assigning to $[M]$ its class in $K_0 (\RepQ)$. 
$\HQ$ may also be given a coproduct $\Delta: \HQ \rightarrow \HQ \otimes \HQ$ defined by
\begin{equation*} \label{coprod}
\Delta(f)([M],[N]) = f([M \oplus N])  
\end{equation*}
The product and coproduct combine to give $\HQ$ the structure of a graded connected co-commutative bialgebra, and hence a graded Hopf algebra. By the Milnor-Moore theorem, $\HQ \simeq \U(\n_{\Q})$, where $\n_{\Q}$ is the Lie sub-algebra of primitive elements. Let $\overline{\Q}$ be the un-oriented graph underlying the quiver $\Q$. $\overline{\Q}$ may be viewed as the Dynkin diagram of a symmetric Kac-Moody algebra $\g(\overline{\Q})$ with triangular decomposition $$\g(\overline{\Q}) =\n_- \oplus \h \oplus \n_+ . $$ The main result of this paper is the following:

\begin{theorem} \label{main_thm}
Let $\Q$ be a quiver without self-loops (but with cycles allowed), and \mbox{ $ \g(\overline{\Q}) =\n_- \oplus \h \oplus  \n_+ $ } the symmetric Kac-Moody algebra with Dynkin diagram $\overline{Q}$. There exists a Hopf algebra homomorphism
\[
\rho : \U(\n_+) \rightarrow \HQ
\]
\end{theorem}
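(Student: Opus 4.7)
The plan is to define $\rho$ on the Chevalley generators $E_i$ of $\n_+$ (indexed by vertices $i \in \I$) by
\[
\rho(E_i) = [S_i],
\]
where $S_i \in \RepQ$ is the simple object with a one-dimensional pointed set at vertex $i$, zero at every other vertex, and zero maps on all edges. To extend this to a well-defined algebra map $\U(\n_+) \to \HQ$, one must verify that the elements $[S_i]$ satisfy the Serre relations of the symmetric Kac--Moody algebra $\g(\overline{\Q})$, namely $[[S_i],[S_j]] = 0$ when $i,j$ are not adjacent in $\overline{\Q}$, and $(\on{ad}[S_i])^{1+n_{ij}}([S_j]) = 0$ when they are joined by $n_{ij} \geq 1$ edges. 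Once this is done, the resulting algebra homomorphism is automatically a Hopf algebra map: by the structure of the coproduct $\Delta(f)([M],[N]) = f([M \oplus N])$ and the fact that $S_i$ is indecomposable, each $[S_i]$ is primitive in $\HQ$, so $\rho$ sends the primitive generators $E_i$ of $\U(\n_+)$ to primitives, which is enough since $\U(\n_+)$ is generated as an algebra by its primitives.

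The first step is an explicit classification of representations of $\Q$ over $\Fun$ of dimension vector supported on $\{i,j\}$. Since $\Iso(\VFun)$ consists of pointed sets up to bijection and morphisms are partial bijections, a representation with $\dim V_i = \dim V_j = 1$ is determined by specifying, for each edge $e$ between $i$ and $j$, whether $f_e$ is zero or the unique nonzero partial bijection $\Fun \to \Fun$. This gives a very concrete combinatorial parametrization, allowing one to compute $[S_i]^k[S_j]$ and $[S_j][S_i]^k$ by counting sub-objects, automorphism groups, and the numbers $\mathbf{P}^R_{M,N}$ directly. In particular, $[S_i]^k = k!\,[S_i^{\oplus k}]$ since $\on{Aut}(S_i^{\oplus k}) = \mathfrak{S}_k$, and adding a copy of $S_j$ to $S_i^{\oplus k}$ produces a representation whose isomorphism class is determined by the multiset of nonzero edges together with their orientation.

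The main obstacle is the Serre relation at depth $1+n_{ij}$: we must show that the alternating sum
\[
\sum_{k=0}^{1+n_{ij}} (-1)^k \binom{1+n_{ij}}{k} [S_i]^{1+n_{ij}-k}\,[S_j]\,[S_i]^k
\]
vanishes in $\HQ$. Expanding both factors using the classification above, each $[S_i]^a [S_j][S_i]^b$ decomposes as a sum over isomorphism classes of $\Fun$-representations $R$ of dimension vector $(1+n_{ij})e_i + e_j$, indexed by the choice, for each of the $n_{ij}$ edges between $i$ and $j$, of which $S_i$-summand the edge connects to (with a possible zero option). Organizing the contributions by isomorphism class of $R$ and comparing with the binomial coefficients, the inclusion-exclusion pattern matches exactly and the sum telescopes to zero; the key combinatorial input is that the cardinality of the orbit of a particular edge-configuration under $\on{Aut}(S_i^{\oplus a}) \times \on{Aut}(S_j) \times \on{Aut}(S_i^{\oplus b})$ depends only on the isomorphism class of $R$, so the absolute coefficients in front of each $[R]$ are pure products of factorials that cancel against the binomial weights. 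After this verification, the theorem follows from the universal property of $\U(\n_+)$ together with the primitivity observation above.
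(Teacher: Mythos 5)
Your overall strategy coincides with the paper's: send $E_i \mapsto [S_i]$, check the Serre relations in $\HQ$, and get the Hopf property for free from the primitivity of the $[S_i]$ (which, as you note, follows from indecomposability of $S_i$ and the form of $\Delta$). The difficulty is entirely concentrated in the Serre relation, and there your argument has a genuine gap: the vanishing of the coefficient of each isomorphism class $[R]$ in the alternating sum is asserted ("the inclusion-exclusion pattern matches exactly and the sum telescopes to zero") rather than proved, and the mechanism you offer for the cancellation --- that "the absolute coefficients in front of each $[R]$ are pure products of factorials that cancel against the binomial weights" --- is not the right one. The binomial weights $\binom{1+n_{ij}}{k}$ do not cancel against orbit counts; they are absorbed at the outset by rewriting the relation in terms of divided powers $[S_i]^{(l)} = [S_i]^l/l! = [S_i^{\oplus l}]$, after which the relation reads $\sum_l (-1)^l [S_i]^{(l)}[S_j][S_i]^{(1-a_{ij}-l)} = 0$ with all weights equal to $\pm 1$. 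What remains to be shown is a second, different binomial identity, applied separately for each fixed $R$.

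Concretely, the coefficient of $[R]$ in $[S_i]^{(l)}[S_j][S_i]^{(n)}$ is the number of three-step filtrations $\emptyset \subset F^1R \subset F^2R \subset F^3R = R$ with $F^1R \simeq S_i^{\oplus n}$, $F^2R/F^1R \simeq S_j$, $F^3R/F^2R \simeq S_i^{\oplus l}$. The key observation you are missing is that such a filtration exists precisely when $V_R \subset F^1R \subset U_R$, where $U_R$ is the intersection of the kernels of the edge maps $V_i \to V_j$ and $V_R$ is the union of the images of the edge maps $V_j \to V_i$; hence the count is the single binomial coefficient $\binom{u_R - v_R}{\,n - v_R\,}$ with $u_R = \dim U_R$, $v_R = \dim V_R$. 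The alternating sum over $l$ (with $l+n = 1-a_{ij}$ fixed) is then $\pm\sum_{k}(-1)^k\binom{u_R-v_R}{k}$, which vanishes because $u_R - v_R \geq 1$: indeed $u_R \geq (1-a_{ij}) - \#\{h : i \to j\}$ and $v_R \leq \#\{h : j \to i\}$, so $u_R - v_R \geq 1$. Without identifying the subspaces $U_R$ and $V_R$ and this inequality, your "telescoping" claim has no support; note in particular that the identity $\sum_k(-1)^k\binom{N}{k}=0$ fails for $N=0$, so the lower bound $u_R - v_R \geq 1$ is not a formality but the point where the hypothesis on the dimension vector of $R$ enters. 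Your classification of the representations of dimension vector $(1+n_{ij})\alpha_i + \alpha_j$ is a reasonable starting point, but as written the proof of the central cancellation is missing.
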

For a detailed statement see section \ref{rho}. In fact, $\rho$ may be extended to give a homomorphism from $\U(\b')$, where $\b'$ is the derived Borel subalgebra of $\g_{\overline{\Q}}$ to an extended version $\HQe$ of the Hall algebra. 
This result should be viewed as a degenerate version at $q=1$ of the following theorem, due to Ringel and Green \cite{R2, G}. 

\begin{theorem}
Let $\U_q (\g_{\overline{\Q}})$ be the quantum Kac-Moody algebra attached to $\overline{\Q}$, and $\U_q (\b')$ the (derived) quantum Borel subalgebra. Let $\widetilde{\mathbf{H}}_{\on{Rep}(\Q,q)}$ denote the extended Hall algebra of the category of representations of $\Q$ over the finite field $\mathbb{F}_q$. Then there exists a Hopf algebra homomorphism $\tau: \U_q (\b') \rightarrow \on{H}_{\on{Rep}(\Q,q)}$.  
\end{theorem}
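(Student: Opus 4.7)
The strategy is to prescribe $\tau$ on the Chevalley generators of $\U_q(\b')$ and then verify that the defining relations of the quantum group, as well as the Hopf structure, are respected. Send $E_i$ to $[S_i]$, the class of the simple representation of $\Q$ supported at the vertex $i \in \I$, and send each Cartan generator $K_i$ to the group-like element in the extended part of $\widetilde{\mathbf{H}}_{\on{Rep}(\Q,q)}$ that records the class $\alpha_i \in K_0(\on{Rep}(\Q, \mathbb{F}_q))$. The conjugation action of $K_i$ on $[S_j]$ then automatically produces the scalar $q^{(\alpha_i, \alpha_j)}$ dictated by the symmetric Euler form of $\overline{\Q}$, so the commutation relations between Cartan and nilpotent generators hold for free.

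The main substance of the theorem is the verification of the quantum Serre relations
\[
\sum_{k=0}^{1-a_{ij}} (-1)^k \binom{1-a_{ij}}{k}_q [S_i]^{1-a_{ij}-k}\,[S_j]\,[S_i]^k \;=\; 0
\]
for each pair of distinct vertices $i, j$. My plan is to reduce this to the rank-two subquiver on $\{i,j\}$: the monomials above involve only modules built from $S_i$ and $S_j$, and their Hall products are controlled by the numbers $\mathbf{P}^R_{M,N}$, which in turn depend only on $\dim \on{Ext}^1(S_i, S_j)$ and $\dim \on{Ext}^1(S_j, S_i)$, i.e.\ on the count of edges joining $i$ and $j$ in $\overline{\Q}$. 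After enumerating the finitely many isomorphism classes $R$ that contribute in the relevant graded piece and evaluating each Hall number explicitly, the alternating sum collapses via a $q$-binomial identity --- this is the calculation Ringel carried out for finite type, extended by Green to arbitrary acyclic quivers using the twisted bialgebra structure.

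Hopf compatibility then follows from Green's formula, which yields $\Delta[S_i] = [S_i] \otimes 1 + K_i \otimes [S_i]$ for the twisted coproduct, matching the coproduct of $E_i$ in $\U_q(\b')$ exactly; this identity propagates by multiplicativity of $\tau$ to all of $\U_q(\b')$, and the antipode is then forced by the bialgebra structure in each graded degree. The main obstacle is the Serre-relation check: it is the unique step where the combinatorics of extensions inside $\on{Rep}(\Q, \mathbb{F}_q)$ must reproduce a defining relation of the Kac-Moody algebra, and the matching is ultimately a consequence of the fact that $\on{Rep}(\Q, \mathbb{F}_q)$ is hereditary, so the Euler form collapses to the bilinear form encoded by $\overline{\Q}$.
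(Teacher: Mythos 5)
This theorem is not proved in the paper --- it is quoted from Ringel and Green \cite{R2, G} as the $q$-analogue of the paper's main result --- and your outline is precisely the standard argument from those sources (Chevalley generators to $[S_i]$ and to group-like elements of the Cartan part, quantum Serre relations checked by enumerating Hall numbers on the rank-two subquiver spanned by $i$ and $j$, Hopf compatibility via Green's formula), which also parallels the paper's own proof of the $\Fun$-analogue, Theorem \ref{rho_theorem}, where the same filtration count degenerates to an ordinary alternating binomial sum. The one point you should make explicit rather than leave implicit in the phrase ``twisted bialgebra structure'' is that the multiplication on $\widetilde{\mathbf{H}}_{\on{Rep}(\Q,q)}$ must be twisted by $v^{\langle M,N \rangle}$ (with $v^2=q$, $\langle \cdot,\cdot \rangle$ the Euler form) before the alternating sum collapses to the $q$-binomial identity with the correct coefficients; and for quivers with cycles (which the paper allows) one must work with nilpotent representations for the composition-algebra statement to take the asserted form.
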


The homomorphism $\tau$ may be formulated generically over the field $\mathbb{C}(q)$ (viewing $q$ as a variable), in which case it is injective. Moreover, it was shown by Ringel and Green that for $\Q$ of finite type, it is an isomorphism. $\rho$ is not a isomorphism in general, even for finite type quivers ( see example \ref{D_4}), but it is for type $A$, and an injection for $A^{1}_{n-1}$ (i.e. the cyclic quiver). We deduce in those cases the existence of a positive integral basis for $\U(\n_+)$ - a result proven earlier by Kostant \cite{Ko}. 

This paper is organized as follows. In section \ref{vspF1} we introduce the category $\VFun$, describe its basic properties, and deduce a Jordan normal form for endomorphisms. In section \ref{repQF1} we define the category $\RepQ$ of representations of $\Q$ in $\VFun$, describe its properties, and prove the Jordan-H\"older and Krull-Schmidt theorems. Section \ref{trees} gives a description of the indecomposable representations in the case when the graph underlying $\Q$ is a tree. 
Section \ref{Hall_alg} introduces the Hall algebra $\HQ$ of the category $\RepQ$, as well as an extended version $\HQe$. In section \ref{KM} we quickly recall some facts about Kac-Moody algebras. Theorem \ref{main_thm} is proved in section  \ref{rho}. The rest of the paper is devoted to examples. In section \ref{JQ} we consider the Jordan quiver and prove that its Hall algebra is isomorphic to the ring of symmetric functions. In \ref{typeA} we show that for quivers of type $\on{A}$, the homomorphism $\rho$ is an isomorphism. The cyclic quiver is treated in section \ref{cyclic}. Finally, in section \ref{further}, we indicate some natural further questions/directions.  

\bigskip

\noindent {\bf Acknowledgements:} The idea for this paper, as well as the spin-off \cite{Sz2} resulted from a conversation with Pavel Etingof. I am very grateful to him for several insightful suggestions and for pointing me in this direction. I would also like to thank Dirk Kreimer, Olivier Schiffmann, David Ben-Zvi, Valerio Toledano Laredo, and Hugh Thomas for valuable conversations. In particular, the review paper \cite{S} was a tremendous resource, and I have followed its exposition in several places. I am also grateful to the Universite Paris XIII for its hospitality while this paper was being finished. Finally, I would like to thank the anonymous referee for suggesting a number of improvements to this paper.

\section{Vector spaces over $\Fun$} \label{vspF1}

In this section we recall the category of vector spaces over $\Fun$ following \cite{KapS, H}. 
\begin{definition}
The category $\VFun$ of vector spaces over $\Fun$ is defined as follows. 
\begin{eqnarray*}
\on{Ob}(\VFun) & := & \{\textrm{ pointed finite sets } (V, 0_V ) \} \\
\on{Hom}(V,W) & := & \{ \textrm{ maps } f: V \rightarrow W \vert \; f(0_V) = 0_W \\ & & f \vert_{V \backslash f^{-1}(O_W)} \textrm{ is an injection }\}
\end{eqnarray*}
Composition of morphisms is defined as the composition of maps of sets, and so is associative. We refer to the unique morphism $f \in \on{Hom}(V,W)$ such that $f(V)= 0_W$ as the zero map. The \emph{dimension} of $V \in \VFun$ is $\dim(V) := |V| -1$. 
\end{definition}

$\on{Hom}(V,W)$ is a pointed set, with distinguished element the zero map. Thus \\ \mbox{ $\on{Hom}(V,W) \in \VFun$.} 
Given an inclusion of sets $X \subset Y$, we denote by $Y/X$ the pointed set obtained by identifying $X$ to a single point.  The category $\VFun$ has the following properties:

\begin{enumerate}
\item The unique object of $\VFun$ of dimension $0$ is an initial, terminal, and hence zero object, which we denote by $\emptyset$. 
\item Every morphism $f \in \on{Hom}(V,W)$ has a kernel $f^{-1}(0_W)$.
\item Every morphism $f \in \on{Hom}(V,W)$ has a cokernel $W/ f(V)$. 
\item $\VFun$ possesses a symmetric monoidal structure $V \oplus W$ defined as 
$$ V \oplus W := V \amalg W / \{ 0_V, 0_W \}. $$ There is an injection $\iota_V: V \hookrightarrow V \oplus W$ and a surjection $p_V : V \oplus W \rightarrow V$ such that $p_V \circ \iota_V = id_V$ (and same for $W$). 
\item $\VFun$ has another symmetric monoidal structure $V \otimes W$ defined as 
$$ V \otimes W := V \times W / \{ V \times 0_W \cup 0_V \times W \},$$ 
and $\oplus$, $\otimes$ satisfy the usual compatibilies
\[
(V \oplus W) \otimes Z \simeq V \otimes Z \oplus W \otimes Z \hspace{1cm} Z \otimes (V \oplus W) \simeq Z \otimes V \oplus Z \otimes W. 
\]
\item $\VFun$ has an involution (duality) sending $f \in \on{Hom}(V,W)$ to $f^t \in \on{Hom}(W,V)$, where $f^t(w) = f^{-1}(w)$ if $w \in f(V), w \neq 0_W$, and $f^t(w) = 0_V$ otherwise. 
\item For $V \in \VFun$, a \emph{subspace} of $V$ is a subset $U \subset V$ containing $0_V$. $(U,0_V)$ is itself naturally an object in $\VFun$. The intersection of two subspaces $U_1, U_2$ of $V$ is clearly a subspace of $V$, denoted $U_1 \cap U_2$. 
\item If $V, W \in \VFun$, and $U \subset V \oplus W$ is a subspace, then $$U = (U \cap V) \oplus (U \cap W).$$
\end{enumerate}
The first three properties above imply in particular that the notions of complex and exact sequence make sense in $\VFun$. We denote by $\k$ the one-dimensional vector space in $\VFun$, i.e. $\k$ is the unique object up to isomorphism such that $\dim(\k)=1$.  

\medskip
\begin{definition}
Given $V \in \VFun$ and a ring $R$, we define $V \otimes_{\Fun} R$ to be the free right $R$--module on the set $V \backslash 0_V$.
\end{definition}
The functor 
\begin{equation} \label{base_change}
()\otimes_{\Fun}R : \VFun \rightarrow \mathbf{R-Mod}
\end{equation}
 is exact ( i.e. takes short exact sequences in $\VFun$ to short exact sequences of $R$--modules ), and faithful. It is generally not full. 

\section{A normal form for endomorphisms} \label{norm_form}

We proceed to establish a version of the Jordan canonical form for endomorphisms over $\Fun$. For $V \in \VFun$, we denote by $\on{End}(V)$ the pointed set $\on{Hom}(V,V) \in \VFun$. Let $\on{GL}(V) \subset \on{End}(V)$ denote the subset of invertible endomorphisms. Elements of $GL(V)$ are simply permutations fixing $0_V$, and so $GL(V) \simeq \mc{S}_{\dim(V)}$, where $\mc{S}_n$ denotes the symmetric group on $n$ letters.  We say that $T \in \on{End}(V)$ is \emph{nilpotent} if there exists an $n \geq 0$ such that $T^n(x) = 0_V$ for all $x \in V$. 

\begin{definition} 
\begin{itemize}
\item Suppose that $V \in \VFun$, and $T \in \on{End}(V)$. We write \mbox {$T = \oplus_{j \in J} T_j$} if there exists a decomposition $V = \oplus_j V_j$ such that $T(V_j) \subset V_j$, and $T \vert_{V_j} = T_j$. In this case we say that $T$ \emph{is a direct sum of the }$T_j$. 
\item We say that $T$ \emph{is equivalent} to $T' \in \on{End}(W)$ if there exists and isomorphism $\sigma : V \rightarrow W$ such that $T = \sigma^{-1} T' \sigma$. 
\end{itemize}
\end{definition}

Let $\m \in \VFun$ denote the pointed set with elements $\{0_{\mathbf{m}}, 1, 2, \ldots, m \}$. Let \mbox{$\Nm \in \on{End}(\m)$} be the endomorphism defined by $$\Nm(k) = k-1, \; \; \;  k=1 \ldots, m, $$ and $\Cm \in \on{End}(\m)$ the endomorphism defined by $$\Cm(k) = k-1, \; \;  k= 2, \ldots, m, \; \; \Cm(1) = m .$$ $\Nm$ is the analogue of a $m \times m$ nilpotent Jordan block, whereas $\Cm$ is a cyclic permutation of $\m \backslash 0_{\mathbf{m}}$. 

\begin{lemma}[Jordan form] \label{Jordan}
Every $T \in \on{End}(V)$ can be written as a direct sum $T = \oplus_j T_j$ where $T_j \in \on{End}(V_j)$ is equivalent to either $\on{N}_{\mathbf{\dim(V_j)}}$ or $\on{C}_{\mathbf{\dim(V_j)}}$. This decomposition is unique up to the ordering of factors. 
\end{lemma}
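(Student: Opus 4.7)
\bigskip
\noindent\textbf{Proof plan.} The strategy is to read off the decomposition from the orbit structure of $T$ regarded as a partial bijection. I would associate to $T$ the directed graph $G_T$ with vertex set $V \setminus \{0_V\}$ and an edge $x \to T(x)$ whenever $T(x) \neq 0_V$. Each vertex of $G_T$ has out-degree at most one (since $T$ is a function) and in-degree at most one (by the partial-bijection condition in the definition of $\VFun$, which makes $T$ injective on $V \setminus T^{-1}(0_V)$). Hence the underlying undirected graph of $G_T$ is a disjoint union of simple paths and simple cycles.

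Each such component will produce a $T$-invariant subspace. For a maximal directed path $x_1 \to x_2 \to \cdots \to x_m$ in $G_T$ (so $T(x_m) = 0_V$ and $x_1$ has no incoming edge), I would set $V_j := \{0_V, x_1, \ldots, x_m\}$; the relabelling $x_i \mapsto m - i + 1$ is an isomorphism of pointed sets $V_j \xrightarrow{\sim} \m$ that conjugates $T\vert_{V_j}$ to $\Nm$. For a directed cycle $x_1 \to x_2 \to \cdots \to x_m \to x_1$ the analogous relabelling conjugates $T\vert_{V_j}$ to $\Cm$. Since the components of $G_T$ partition $V \setminus \{0_V\}$, the subspaces $V_j$ assemble into a decomposition $V = \oplus_j V_j$ in $\VFun$ (recalling that $\oplus$ in $\VFun$ is disjoint union with base points identified), and $T$-invariance of each $V_j$ yields $T = \oplus_j T_j$ as required.

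For uniqueness, I would observe that in any decomposition $T = \oplus_k T_k$ with each $T_k$ equivalent to some $\Nm$ or $\Cm$, the underlying subset of each summand (minus $0_V$) must be a union of connected components of $G_T$, since $T_k$-invariance of $V_k$ and the partition of $V$ force the edges of $G_T$ incident to points of $V_k$ to stay within $V_k$. Indecomposability of $\Nm$ and $\Cm$ in this sense then forces it to be a single component, so the list of blocks is determined by $G_T$ up to reordering. The only genuine step in the argument is the path-or-cycle dichotomy for components of $G_T$, which is an immediate consequence of the partial-bijection axiom; the rest is combinatorial bookkeeping, and I do not anticipate any serious obstacle.
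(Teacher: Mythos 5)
Your argument is correct and is essentially the paper's own proof in graph-theoretic clothing: the connected components of your $G_T$ are exactly the orbits $\mathcal{O}_x = \{\ldots, T^{-1}(x), x, T(x), \ldots\}$ that the paper uses, with the path/cycle dichotomy corresponding to the paper's split of $V$ into its nilpotent and permutation parts. No substantive difference.
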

  
\begin{proof}
Let $$\Nil \subset V := \{ x \in V | T^{n} (x) =  0_V \textrm{ for } n >> 0 \},$$ and let $$\Perm := V \backslash \Nil.$$ $T$ leaves both invariant, and so we have $ V = \Nil \oplus \Perm$. If $x \in V, x \neq 0_V$, then $x$ has at most one pre-image under $T$, which if nonempty, we denote $T^{-1}(x)$. We define $T^{-(j+1)}(x) := T^{-1}(T^{-j}(x))$, again allowing for the empty set. Let
\[
\mc{O}_x := \{ \ldots, T^{-2}(x), T^{-1}(x), x, T(x), T^{2}(x), \ldots \}.
\] 
$\mc{O}_x$ is invariant under $T$, and is a direct summand of $\Nil$ it it contains $0_V$, and $\Perm$ otherwise. In the first case, $T \vert{\mc{O}_x}$ is equivalent to $\on{N}_{|\mc{O}_x|-1}$, and in the latter to $\on{C}_{|\mc{O}_x |}$. 
Continuing in this fashion, we obtain the desired decomposition. It is clear that it is unique up to re-ordering. 
\end{proof}

\section{Representations of quivers in $\VFun$} \label{repQF1}

Recall that a \emph{quiver} is a directed graph, possibly with self-loops and/or multiple edges between vertices. We denote the set of vertices of the quiver by $\I$, the set of edges by $\E$, and let $r = \# \I$. For an edge $h \in \E$, we denote by $h', h''$ the source and target of $h$ respectively. 

\begin{definition}
A representation of a quiver $\Q$ in $\VFun$ is the following collection of data: 
\begin{itemize}
\item An assignment of an object $V_i \in \VFun$ to each vertex $i \in \I$.
\item For each edge $h \in \E$, a choice of $f_h \in \on{Hom} ( V_{h'},  V_{h''})$. 
\end{itemize}
We denote by $\mathbb{V}$ the data $(V_i, f_h), i \in \I, h \in \E$. 
\end{definition} 

The structure of representation of $\Q$ in $\VFun$ is equivalent to the structure of a module over the \emph{path monoid} of $\Q$. 

\begin{definition}
The category $\RepQ$ is defined as follows. 
\begin{eqnarray*}
\on{Ob}(\RepQ) & := & \{ \textrm{ representations } \V  \textrm{ of } \Q \textrm { in } \VFun \} \\
\on{Hom}(\V,\W) & := & \{ \Phi = (\phi_i)_{i \in \I}, \phi_i \in \on{Hom}(V_i, W_i),  g_h \circ \phi_{h'} = \phi_{h''} \circ f_h \}
\end{eqnarray*}
where $\V=(V_i, f_h)$ and $\W=(W_i, g_h)$. 
\end{definition}

\begin{definition} \begin{itemize}
\item The \emph{dimension} of a $\V \in \RepQ$ is defined to be $$\dim(\V) = \sum_{i \in \I} \dim(V_i).$$
\item The \emph{dimension vector} of $\V$ is the $|\I|$--tuple $$\dimvv(\V) = (\dim(V_i))_{ i \in \I}. $$
\end{itemize} 
\end{definition}

The category $\RepQ$ has the following properties:

\begin{enumerate}
\item The representation $\mathbb{O}$ of $\Q$ which assigns $0 \in \VFun$ to each vertex $i \in \I$ is a zero object.  
\item Every $\Phi \in \on{Hom}(\V,\W)$ has a kernel defined by $ker(\Phi)_i := ker (\phi_i)$. 
\item Every $\Phi \in \on{Hom}(\V,\W)$ has a co-kernel defined by $coker(\Phi)_i := coker(\phi_i)$. 
\item $\RepQ$ has a symmetric monoidal structure $\V \oplus \W$ defined by 
\[
\V \oplus \W := (V_i \oplus W_i, f_h \oplus g_h), \; i \in \I, h \in \E. 
\]
\item If $\mathbb{U} \subset \V \oplus \W$ is a sub-representation, then 
\begin{equation} \label{dir_sum_property}
\mathbb{U}= (\mathbb{U} \cap \V) \oplus (\mathbb{U} \cap \W).
\end{equation} 
\item If $\mathbb{U}$ and $\V$ are sub-representations of $\W \in \RepQ$, then so is $\mathbb{U} \cap \V$, defined by $(\mathbb{U} \cap \V)_i = U_i \cap V_i, \; \; i \in \I$, with edge maps restrictions of those of $\W$. 
\item $\on{Hom}(\V,\W)$ and $\on{Ext}^{n}(\V,\W)$ are finite sets, where the latter is defined as in the Yoneda approach as equivalence classes of $n+2$--step exact sequences. 
\end{enumerate}

In particular, the notions of a complex and exact sequence make sense in the category $\RepQ$. 
\medskip
Given $\V \in \RepQ$ and a field $\kk$, we may apply the base-change functor $() \otimes_{\Fun} \kk$ of section \ref{base_change} to each $V_i$ and the edge maps to obtain a functor
\begin{equation} \label{base_change2}
() \otimes_{\Fun} \kk : \RepQ \rightarrow \on{Rep}(\Q, \kk)
\end{equation}
This functor is exact and faithful, but not typically full. 

\medskip

In dealing with quivers which have cycles, it is sometimes useful to consider the full subcategory of $\RepQn$ of $\RepQ$ consisting of \emph{nilpotent} representations. 

\begin{definition}
A representation $\V \in \RepQ$ is \emph{nilpotent} if there exists $N \geq 0$ such that for any $n \geq N$, $f_{h_n} f_{h_{n-1}} \ldots f_{h_1} = 0$ for any path $h_1 h_2 \dots h_n$ in $Q$ (here we adopt the convention that paths are listed left-to-right in the order of traversal). 
\end{definition}

\begin{remark}
If $\Q$ does not have any cycles, then $\RepQn = \RepQ$. This can be seen by taking $N$ in the definition of nilpotent representation greater than the length of the longest path in $\Q$. 
\end{remark}

\medskip

We proceed to establish two basic structural results for the category $\RepQ$, namely the Jordan-H\"older and Krull-Schmidt theorems. 

\subsection{The Jordan-H\"older theorem} \label{JH}

\begin{definition}
$\V \in \RepQ$ is \emph{simple} if $\V$ does not contain a proper sub-representation $\W \subset \V$. 
\end{definition}

Let $S_i \in \RepQ$ denote the representation defined by the properties:
\begin{itemize}
\item $(S_i)_j = 0, \; j \neq i$, $(S_i)_i = \k$. 
\item All maps $f_h$ are $0$. 
\end{itemize}

\begin{lemma}
The only simple objects in $\RepQn$ are $S_i, \; i \in \I$. 
\end{lemma}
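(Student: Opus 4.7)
The plan is to show that any simple nilpotent representation $\V = (V_i, f_h)$ must coincide with some $S_i$, by producing a one-dimensional concentrated sub-representation.

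First, I would observe that since $\V$ is simple it is non-zero, so some $V_{i_0}$ contains a non-zero element $v_0$. From this I will extract a ``terminal'' vector, i.e.\ a non-zero $v \in V_i$ (for some $i \in \I$) with the property that $f_h(v) = 0_{V_{h''}}$ for every edge $h$ with source $h' = i$. The construction is iterative: if $v_0$ itself is not terminal, pick an edge $h_1$ with $h_1' = i_0$ and $f_{h_1}(v_0) \neq 0$, and set $v_1 := f_{h_1}(v_0) \in V_{h_1''}$. If $v_1$ is not terminal, repeat. This produces a sequence of non-zero vectors $v_0, v_1, v_2, \ldots$ with $v_k = f_{h_k}\cdots f_{h_1}(v_0) \neq 0$, which by the nilpotency hypothesis on $\V$ must stop after finitely many steps; the last vector $v$ constructed is the required terminal element.

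Next, I would set $W_i := \{0_{V_i}, v\} \subset V_i$ and $W_j := \{0_{V_j}\}$ for $j \neq i$, and verify that $\mathbb{W} := (W_j, f_h|_{W_{h'}})$ is a sub-representation of $\V$ in $\RepQn$. This is essentially immediate from the terminality of $v$: for any edge $h$ with $h' = i$ we have $f_h(v) = 0_{V_{h''}} \in W_{h''}$, while for any edge $h$ with $h' \neq i$ the set $W_{h'}$ consists only of the base point, which is sent to the base point. Hence each $f_h$ restricts to a morphism $W_{h'} \to W_{h''}$.

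Finally, $\mathbb{W}$ is a non-zero sub-representation of $\V$, so by simplicity $\mathbb{W} = \V$. Unwinding the definition, this says $V_i$ is one-dimensional, every other $V_j$ is the zero object, and all edge maps vanish; in other words $\V \cong S_i$. Conversely each $S_i$ is obviously simple, so these are exactly the simple objects of $\RepQn$. The only delicate point is the iterative construction of the terminal vector $v$: one must use the nilpotency hypothesis in the form given in the definition (all sufficiently long compositions $f_{h_n}\cdots f_{h_1}$ vanish) to guarantee termination, which is why the statement is restricted to $\RepQn$ rather than to $\RepQ$.
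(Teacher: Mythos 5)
Your proposal is correct and follows essentially the same route as the paper: both arguments use nilpotency to push a non-zero vector along a path until it is killed by every outgoing edge, and then observe that the resulting one-dimensional concentrated subobject is a sub-representation, forcing it to equal $\V$ by simplicity. Your write-up merely makes the termination argument and the sub-representation check more explicit than the paper does.
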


\begin{proof}
First, it is clear that the $S_i$ are simple. Let $\V = (V_i, f_h)$ be simple, and let $i_0$ be any vertex such that $V_{i_0} \neq 0$. Let $x \in V_{i_0}$ be a non-zero element. Since $\V \in \RepQn$ there exists a path $h_1 \ldots h_n$ such that $f_{h_n} \ldots f_{h_1}. x \neq 0$ but $f_{h_{n+1}} \ldots f_{h_1}.x =0$ for every edge $h_{n+1}$ leaving the terminal vertex $i_n$. We thus obtain an inclusion $S_{i_n} \hookrightarrow \V$, but since $\V$ is simple, this is an isomorphism.  
\end{proof}

It follows easily by induction on $\dim(\V)$ that every $\V \in \RepQ$ possesses a filtration 
\[
0 = F_0 \V \subset F_1 \V \subset \ldots \subset F_n V = \V
\]
 such that $F_i \V / F_{i-1} \V$ is simple. The following theorem shows that the simples occurring in the composition series are unique up to permutation. The proof given in \cite{E} for finite-dimensional modules over algebras goes through verbatim for the category $\RepQ$, and we reproduce the proof for completeness. 
 
 \begin{theorem}[Jordan-H\"older] \label{JH_thm}
 Let $0 = F_0 \V \subset F_1 \V \subset \ldots \subset F_n V = \V$ and $0 = F'_0 \V \subset F'_1 \V \subset \ldots \subset F'_{m} \V = \V$ be two filtrations of $\V$ such that the representations $ \mathbb{A}^j = F_j \V / F_{j-1} \V$ and $\mathbb{B}^j = F'_j \V /  F'_{j-1} \V$ are simple for all $j$. Then $m=n$, and there is a permutation $\sigma \textrm{ of } 1, \ldots, n$ such that $\mathbb{A}^j \simeq \mathbb{B}^{\sigma(j)}$ for $j = 1, \ldots, n. $
 \end{theorem}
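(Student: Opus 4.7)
The plan is to imitate the classical Jordan-Hölder proof, inducting on $\dim(\V)$. The key point is that although $\RepQ$ is not abelian, properties (1)--(8) of the preceding sections (kernels, cokernels, intersections of subobjects, and the direct-sum-detection property $\mathbb{U} = (\mathbb{U} \cap \V) \oplus (\mathbb{U} \cap \W)$) give enough of the standard subobject machinery to run the usual argument. If $\dim(\V)= 0$ or $1$ the statement is trivial, so assume $\dim(\V) \geq 2$ and that the result holds for all representations of strictly smaller dimension.

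Compare the two maximal proper subrepresentations $F_{n-1}\V$ and $F'_{m-1}\V$. In the easy case $F_{n-1}\V = F'_{m-1}\V$, the quotient identification gives $\mathbb{A}^n \simeq \mathbb{B}^m$, and the inductive hypothesis applied to this common subrepresentation, equipped with the truncated filtrations $F_0 \subset \cdots \subset F_{n-1}$ and $F'_0 \subset \cdots \subset F'_{m-1}$, finishes the job. In the harder case $F_{n-1}\V \neq F'_{m-1}\V$, I would form the sub-representation $F_{n-1}\V + F'_{m-1}\V$ (defined vertex-wise as the union of the underlying pointed sets, which is a subrepresentation because both summands are closed under the edge maps). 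Since this strictly contains $F_{n-1}\V$ and $\V/F_{n-1}\V = \mathbb{A}^n$ is simple, it must equal $\V$; symmetrically on the other side. Setting $\mathbb{K} = F_{n-1}\V \cap F'_{m-1}\V$, one establishes an $\RepQ$-version of the second isomorphism theorem, producing canonical isomorphisms $F_{n-1}\V / \mathbb{K} \simeq \mathbb{B}^m$ and $F'_{m-1}\V/\mathbb{K} \simeq \mathbb{A}^n$. Choosing any composition series $0 = G_0 \subset G_1 \subset \cdots \subset G_k = \mathbb{K}$, I would insert it into two auxiliary composition series of $\V$, namely $G_\bullet \subset F_{n-1}\V \subset \V$ and $G_\bullet \subset F'_{m-1}\V \subset \V$. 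Applying the induction hypothesis to $F_{n-1}\V$ (comparing the original $F_\bullet$ with the auxiliary $G_\bullet \subset F_{n-1}\V$) and to $F'_{m-1}\V$ (comparing $F'_\bullet$ with $G_\bullet \subset F'_{m-1}\V$), and then matching simple factors across the two auxiliary series using the isomorphisms above, yields the desired equality of lengths and permutation of simple quotients.

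The main obstacle I expect is verifying the second isomorphism theorem inside $\RepQ$ without appealing to additive structure on $\on{Hom}$-sets. Vertex by vertex, the underlying statement is a purely set-theoretic bijection between $(U_1 \cup U_2)/U_2$ and $U_1/(U_1 \cap U_2)$ given by inclusion, but I must check that this bijection is a morphism of representations, i.e. that it intertwines the induced edge maps on the quotients. This amounts to checking compatibility of unions, intersections, and quotients with the edge maps $f_h$ of $\V$, which in turn should reduce to property (8) of $\VFun$ together with the definitions of kernel and cokernel given at the vertex level; this is where the non-abelian nature of $\VFun$ requires the most care. Once this compatibility is in hand, the inductive argument goes through verbatim as in the classical setting.
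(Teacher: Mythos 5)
Your argument is correct, but it is the mirror image of the one the paper gives. The paper inducts from the \emph{bottom} of the filtrations: it compares the simple subobjects $\mathbb{A}^1 = F_1\V$ and $\mathbb{B}^1 = F'_1\V$, and in the nontrivial case uses that two distinct simples intersect trivially to produce an injection $\mathbb{A}^1 \oplus \mathbb{B}^1 \hookrightarrow \V$; it then refines via a composition series of $\V/\on{Im}(f)$ and matches the two resulting pairs of filtrations of $\V/\mathbb{A}^1$ and $\V/\mathbb{B}^1$ by induction. You instead work from the \emph{top}, comparing the maximal terms $F_{n-1}\V$ and $F'_{m-1}\V$ and invoking a second isomorphism theorem for the union and intersection. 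Both routes are sound. The paper's version leans on machinery already established (the direct-sum property \eqref{dir_sum_property} and the triviality of $(\mathbb{A}^1\oplus\mathbb{B}^1)/\mathbb{A}^1 \simeq \mathbb{B}^1$), so no new lemma is needed; yours requires the isomorphism $(\mathbb{U}_1\cup\mathbb{U}_2)/\mathbb{U}_2 \simeq \mathbb{U}_1/(\mathbb{U}_1\cap\mathbb{U}_2)$, but this is essentially free over $\Fun$: the ``sum'' of subobjects is the literal union, the two quotients have identical underlying pointed sets since $(U_1\cup U_2)\setminus U_2 = U_1\setminus(U_1\cap U_2)$ at each vertex, and the induced edge maps agree because for $x\in U_1$ one has $f_h(x)\in U_2$ if and only if $f_h(x)\in U_1\cap U_2$. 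You should also record the small observation (which you gesture at with ``symmetrically'') that in the hard case neither of $F_{n-1}\V$, $F'_{m-1}\V$ contains the other, since a strict containment would exhibit a proper nonzero subobject of a simple quotient; this is what guarantees the union strictly exceeds each and hence equals $\V$.
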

 
 \begin{proof}
 The proof is by induction on $\dim(\V)$. If $\dim(\V) = 1$, then $\V$ is simple, and the theorem holds trivially. Suppose the theorem holds for all representations in $\RepQ$ of dimension at most $r$, and $\dim(\V) = r+1$. If $\mathbb{A}^1 = \mathbb{B}^1$ as subsets, then we are done since the theorem holds for $\V / \mathbb{A}^1 = \V / \mathbb{B}^1$. Otherwise, $\mathbb{A}^1 \neq \mathbb{B}^1$, which implies that $\mathbb{A}^1 \cap \mathbb{B}^1 = 0$, since they are simple. We therefore obtain an injection $f: \mathbb{A}^1 \oplus \mathbb{B}^1 \rightarrow \V$. Let $\mathbb{U} = \V / Im(f)$, and let $0=\bar{F}_0 \mathbb{U} \subset \ldots \subset \bar{F}_p \mathbb{U} = \mathbb{U}$ be a filtration of $\mathbb{U}$ with simple quotients $\Z^i= \bar{F}_i \mathbb{U} / \bar{F}_{i-1} \mathbb{U}$. Then 
 \begin{itemize}
 \item $\V/ \mathbb{A}^1$ has a filtration with successive quotients $\mathbb{B}^1, \Z^1, \ldots, \Z^p$, and another filtration with successive quotients $\mathbb{A}^2, \cdots, \mathbb{A}^n$. 
 \item $\V / \mathbb{B}^1$ has a filtration with successive quotients $\mathbb{A}^1, \Z^1, \ldots , \Z^p$, and another with successive quotients $\mathbb{B}^2, \ldots, \mathbb{B}^m$. 
 \end{itemize}  
 By the induction hypothesis, the collection (with multiplicities) $\{ \mathbb{A}^1, \mathbb{B}^1, \Z^1, \ldots, \Z^p \}$ coincides on one hand with $\{ \mathbb{A}^1, \ldots, \mathbb{A}^n \}$ and on the other with $\{  \mathbb{B}^1, \ldots, \mathbb{B}^m \}$, which implies the theorem. 
 \end{proof}
 
\begin{corollary} \label{K0}
$K_0 (\RepQn) = \mathbb{Z}^{|\I|}$, spanned by the classes of the $S_i, \; i \in \I$. 
\end{corollary}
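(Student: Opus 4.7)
The plan is to recognize that $K_0(\RepQn)$ is the free abelian group on isomorphism classes of objects modulo the relations $[\V] = [\V'] + [\V'']$ for every short exact sequence $0 \to \V' \to \V \to \V'' \to 0$ in $\RepQn$, and then to show that the classes $[S_i]$ form a $\mathbb{Z}$-basis. Spanning follows from Jordan--H\"older, and linear independence follows by producing a surjective additive invariant onto $\mathbb{Z}^{|\I|}$.

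For the spanning step, I would proceed by induction on $\dim(\V)$. Given $\V \in \RepQn$, the preceding lemma identifies the simple objects of $\RepQn$ as the $S_i$, and the remark after Theorem \ref{JH_thm} produces a composition series
\[
0 = F_0 \V \subset F_1 \V \subset \cdots \subset F_n \V = \V
\]
with each quotient $F_j \V / F_{j-1} \V$ isomorphic to some $S_{i_j}$. Each inclusion $F_{j-1} \V \hookrightarrow F_j \V$ fits into a short exact sequence in $\RepQn$ (kernels and cokernels exist by properties (2) and (3) of $\RepQ$), so in $K_0$ we obtain $[F_j \V] = [F_{j-1} \V] + [S_{i_j}]$, and iterating gives $[\V] = \sum_{j=1}^n [S_{i_j}]$.

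For linear independence, I would use the dimension vector $\dimvv : \on{Ob}(\RepQn) \to \mathbb{Z}^{|\I|}_{\geq 0}$ defined in the previous section. Since the $\VFun$-dimension is additive on short exact sequences in $\VFun$ (a set-level observation: $|V| - 1 = (|V'|-1) + (|V/V'|-1)$ when $V' \subset V$), and kernels/cokernels in $\RepQn$ are computed vertexwise, $\dimvv$ is additive on short exact sequences in $\RepQn$. It therefore descends to a group homomorphism $\dimvv_* : K_0(\RepQn) \to \mathbb{Z}^{|\I|}$ with $\dimvv_*([S_i]) = e_i$, the standard basis vector. Any relation $\sum_i n_i [S_i] = 0$ in $K_0$ would then force $\sum_i n_i e_i = 0$ in $\mathbb{Z}^{|\I|}$, hence all $n_i = 0$.

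I do not anticipate a serious obstacle here; the result is essentially a formal consequence of Jordan--H\"older together with the additivity of the dimension vector. The only subtlety worth being careful about is ensuring that the exact category structure on $\RepQn$ (and hence its $K_0$) is the one inherited vertexwise from $\VFun$, so that short exact sequences are precisely the sequences that are exact on each vertex; this was set up explicitly in properties (2) and (3) of $\RepQ$.
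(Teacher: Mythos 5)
Your proof is correct and follows the route the paper intends: the corollary is stated as an immediate consequence of the Jordan--H\"older theorem (spanning from composition series, independence from the additivity of the dimension vector), and your write-up simply makes that standard argument explicit.
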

 
\subsection{The Krull-Schmidt theorem} \label{KS}
 
 We say that $\V \in \RepQ$ is \emph{indecomposable} if it cannot be written as $\V=\mathbb{U} \oplus \W$, with $\mathbb{U}, \W \neq 0$. We say that $\Phi = (\phi_i)_{i \in I} \in \on{End}(\V)$ is \emph{nilpotent} if $\phi_i \in \on{End}(V_i)$ is nilpotent for every $i$. 
 
\begin{theorem}[Krull-Schmidt] \label{KS_thm}
Every $\V \in \RepQ$ can be uniquely (up to reordering) decomposed into a direct sum of indecomposable representations.  
\end{theorem}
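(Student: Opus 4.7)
The plan is to prove existence and uniqueness separately, both by induction on $\dim(\V)$. Existence is essentially free: if $\V$ is indecomposable, we are done; otherwise $\V = \mathbb{U} \oplus \W$ with $\mathbb{U}, \W \neq 0$, hence $\dim(\mathbb{U}), \dim(\W) < \dim(\V)$, and the inductive hypothesis produces indecomposable decompositions of each. The interesting content is uniqueness, and the key tool is property (5) of $\RepQ$: if $\mathbb{U} \subset \V \oplus \W$ then $\mathbb{U} = (\mathbb{U} \cap \V) \oplus (\mathbb{U} \cap \W)$. This is a strong ``distributivity'' property that replaces the usual Fitting lemma / local-endomorphism-ring machinery (which is unavailable here since $\on{End}(\V)$ carries no additive structure).

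The first step I would carry out is to iterate property (5) to obtain the following observation: if $\mathbb{U}$ is a sub-representation of $\W_1 \oplus \cdots \oplus \W_m$, then
\[
\mathbb{U} = \bigoplus_{j=1}^m (\mathbb{U} \cap \W_j).
\]
This follows by induction on $m$ after writing $\W_1 \oplus \cdots \oplus \W_m = \W_1 \oplus (\W_2 \oplus \cdots \oplus \W_m)$ and applying property (5) twice.

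Now suppose $\V = \V_1 \oplus \cdots \oplus \V_n = \W_1 \oplus \cdots \oplus \W_m$ are two decompositions into indecomposables. Viewing $\V_1 \subset \W_1 \oplus \cdots \oplus \W_m$, the iterated distributivity gives $\V_1 = \bigoplus_j (\V_1 \cap \W_j)$. Since $\V_1$ is indecomposable and nonzero, exactly one summand is nonzero, say $\V_1 = \V_1 \cap \W_{i_1}$, i.e.\ $\V_1 \subset \W_{i_1}$. Symmetrically, viewing $\W_{i_1} \subset \V_1 \oplus \cdots \oplus \V_n$ gives $\W_{i_1} = \bigoplus_j (\W_{i_1} \cap \V_j)$; the summand $\W_{i_1} \cap \V_1 = \V_1$ is nonzero, and since $\W_{i_1}$ is indecomposable all other summands vanish, forcing $\W_{i_1} = \V_1$. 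Quotienting by $\V_1 = \W_{i_1}$ yields
\[
\V_2 \oplus \cdots \oplus \V_n \simeq \V / \V_1 \simeq \W_1 \oplus \cdots \oplus \widehat{\W_{i_1}} \oplus \cdots \oplus \W_m,
\]
and the inductive hypothesis (applied to a representation of strictly smaller dimension) finishes the argument and produces the required permutation.

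The main obstacle I anticipate is purely bookkeeping rather than conceptual: one must verify that iterating property (5) and ``cancelling'' the matched summand $\V_1 = \W_{i_1}$ from both sides is legitimate in the non-abelian setting of $\RepQ$. Concretely, I would need to check that direct-sum complements are honest quotients in $\RepQ$ (so that $\V / \V_1 \simeq \V_2 \oplus \cdots \oplus \V_n$ really holds representation-wise), which reduces vertex-by-vertex to the analogous fact in $\VFun$ — immediate from the definitions. Once this is granted, the induction closes cleanly, and no local-endomorphism-ring or Fitting-lemma input is required.
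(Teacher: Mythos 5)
Your proof is correct, but it takes a genuinely different route from the paper's. The paper follows the classical Krull--Schmidt template: it first proves, via the Jordan normal form of Section 3, that every endomorphism of an indecomposable object is either nilpotent or an isomorphism (the $\Fun$--analogue of the local-endomorphism-ring lemma), and then runs the exchange argument on the compositions $\theta_r = p_1 \iota'_r p'_r \iota_1 : \V_1 \rightarrow \V_1$, showing that exactly one of them is an isomorphism. You instead lean entirely on the distributivity property (5), which is special to $\VFun$ (it fails over a field, where a subspace of a direct sum need not split along the summands) and which immediately forces each indecomposable summand of one decomposition to coincide, as a literal subset of $\V$, with a summand of the other; no endomorphism analysis is needed. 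Your route is shorter and yields the stronger conclusion $\V_1 = \W_{i_1}$ (equality of subrepresentations, not merely isomorphism), which also disposes of the cancellation worry you raise at the end: since direct sums in $\RepQ$ are vertexwise pointed disjoint unions, the complements $\V_2 \oplus \cdots \oplus \V_n$ and $\bigoplus_{j \neq i_1} \W_j$ are then literally the same subrepresentation of $\V$, and the induction closes without passing through quotients at all. What the paper's longer argument buys in exchange is the nilpotent-or-invertible lemma itself, which is of independent interest and is the form of the proof that survives in categories where property (5) is unavailable.
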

 
 The proof proceeds in the same manner as in the case of finite-dimensional representations of algebras over a field. We begin with the following
 
 \begin{lemma} 
 If $\V \in \RepQ$ is indecomposable, then every $\Phi \in \on{End}(\V)$ is either nilpotent or an isomorphism. 
 \end{lemma}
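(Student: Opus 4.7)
The plan is to carry out a Fitting-style decomposition argument at each vertex, using the Jordan form of Lemma \ref{Jordan}, and then show that the decomposition is preserved by the edge maps, so that it upgrades to a decomposition of $\V$ in $\RepQ$.

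First, I would fix $\Phi = (\phi_i)_{i \in \I} \in \on{End}(\V)$ and apply Lemma \ref{Jordan} to each $\phi_i \in \on{End}(V_i)$. This yields, for each vertex $i$, a canonical splitting
\[
V_i = \Nil_i \oplus \Perm_i, \qquad \Nil_i = \{ x \in V_i \mid \phi_i^n(x) = 0_{V_i} \text{ for } n \gg 0 \},
\]
with $\Perm_i = V_i \setminus \Nil_i$, as in the proof of the lemma. On $\Nil_i$ the map $\phi_i$ is nilpotent, while on $\Perm_i$ it is a permutation (a direct sum of cyclic blocks $\Cm$).

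The main step — and the real content of the argument — is to check that every edge map $f_h : V_{h'} \to V_{h''}$ respects this vertex-wise decomposition, i.e.\ that $f_h(\Nil_{h'}) \subset \Nil_{h''}$ and $f_h(\Perm_{h'}) \subset \Perm_{h''}$. Both inclusions follow from the intertwining identity $\phi_{h''} \circ f_h = f_h \circ \phi_{h'}$. For $x \in \Nil_{h'}$ with $\phi_{h'}^n(x) = 0$, we get $\phi_{h''}^n(f_h(x)) = f_h(0) = 0$, so $f_h(x) \in \Nil_{h''}$. For $x \in \Perm_{h'}$ with $x \ne 0$, periodicity gives $\phi_{h'}^N(x) = x$ for some $N > 0$, whence $\phi_{h''}^N(f_h(x)) = f_h(x)$; thus $f_h(x)$ is either $0$ or lies on a $\phi_{h''}$-cycle, and in either case $f_h(x) \in \Perm_{h''}$. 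Together with property $(8)$ of Section \ref{repQF1}, this shows that the subsets $\Nil(\Phi) := (\Nil_i)_{i \in \I}$ and $\Perm(\Phi) := (\Perm_i)_{i \in \I}$ are sub-representations of $\V$ and that
\[
\V = \Nil(\Phi) \oplus \Perm(\Phi)
\]
in $\RepQ$.

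Since $\V$ is indecomposable, exactly one of $\Nil(\Phi)$, $\Perm(\Phi)$ equals $\V$. If $\Nil(\Phi) = \V$, then for each $i$ every element of $V_i$ is eventually killed by $\phi_i$; finiteness of $\I$ and of each $V_i$ lets us choose a single $N$ with $\phi_i^N \equiv 0$ for all $i$, so $\Phi$ is nilpotent. If instead $\Perm(\Phi) = \V$, each $\phi_i$ is a permutation of $V_i$ fixing $0_{V_i}$, hence an isomorphism in $\VFun$. The intertwining relation then gives $f_h \circ \phi_{h'}^{-1} = \phi_{h''}^{-1} \circ f_h$, so $\Phi^{-1} := (\phi_i^{-1})_{i \in \I}$ is a morphism in $\RepQ$ inverse to $\Phi$, and $\Phi$ is an isomorphism. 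The only non-routine point is the compatibility of the Fitting decomposition with the edge maps; once that is in hand, the rest is formal.
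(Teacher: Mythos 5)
Your proof is correct and follows essentially the same route as the paper: decompose each $\phi_i$ via Lemma \ref{Jordan} into its nilpotent and permutation parts, check that the edge maps preserve the resulting vertex-wise splitting so that $\V = \V^{\Nil} \oplus \V^{\Perm}$ in $\RepQ$, and invoke indecomposability. The paper leaves the compatibility with the edge maps as "one checks"; your intertwining argument is exactly the verification intended.
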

 
 \begin{proof}
 Let $\Phi = (\phi_i)_{i \in I} \in \on{End}(\V)$. By lemma \ref{Jordan}, $\phi_i = \phi^{\Nil}_i \oplus \phi^{\Perm}_i$, where $\phi^{\Nil} \in \on{End}(V^{\Nil}_i)$ and $\phi^{\Perm}_i \in \on{End}(V^{\Perm}_i)$ are the nilpotent and invertible parts of $\phi_i$ respectively, and $V_i = V^{\Nil}_i \oplus V^{\Perm}_i$. Define $\V^{\Nil}$ by $(V^{\Nil})_i := V^{Nil}_i $ and $\V^{\Perm}$ by $(\V^{\Perm})_i = V^{\Perm}_i$, then one checks that $\V^{\Nil}$ and $\V^{\Perm}$ are subrepresentations, and so \mbox{$\V = \V^{\Nil} \oplus \V^{\Perm}$}. Since $\V$ is indecomposable, one of the factors must be $\mathbb{O}$. 
 \end{proof}
 
 \begin{proof}[Proof of theorem \ref{KS_thm}]
 
 One shows easily by induction on $\dim{\V}$ that $\V$ has a decomposition into indecomposable representations. Suppose that 
 \[
 \V = \V_1 \oplus \V_2 \oplus \ldots \oplus \V_k = \V'_1 \oplus \V'_2 \oplus \ldots \oplus \V'_l
 \]
 are two such decompositions. Denote by $\iota_s : \V_s \hookrightarrow \V$, $p_s : \V \rightarrow \V_s$ (resp. $\iota'_s, p'_s$) the inclusion and projection from/to $\V_s$ (resp. $\V'_s$). Let $\theta_r  := p_1 \iota'_r p'_r\iota_1 : \V_1 \rightarrow \V_1$. Suppose that at least two of the $\theta_r$ are non-zero, say $\theta_1 \textrm{ and } \theta_2$. Then neither is an isomorphism, and so by the previous lemma, they have to be nilpotent. However, since they are non-zero, there exist non-zero $x_1, x_2 \in \V_1$, $x_1 \neq x_2$ such that $\theta_i (x_i) = x_i$, contradicting the nilpotence. Thus, all but one, say $\theta_1$ are $0$, and $\theta_1: \V_1 \rightarrow \V_1$ is an isomorphism. This implies that $p'_1 \circ \iota_1 : \V_1 \rightarrow \V'_1$ is an isomorphism. 
 
 Let $\mathbb{S} = \oplus_{r > 2} \V_r$, and $\mathbb{S}' = \oplus_{s>2} \V'_s$, and let $\rho: \mathbb{S} \rightarrow \mathbb{S}'$ be the composition
 \[
 \rho: \mathbb{S} \hookrightarrow \V \rightarrow \mathbb{S}'
 \]
$\dim{\V}_1 = \dim{\V'}_1$ implies that  $\dim(\mathbb{S}) = \dim(\mathbb{S}')$, and  since $Ker(\rho) = 0$, $\rho$ is an isomorphism. We may now apply the above argument to the summand $\mathbb{S}$, proving the theorem. 
 
 \end{proof}
 
 \section{Quivers of tree type and their indecomposable representations} \label{trees}
 
 In this section, we show that for a quiver $\Q$ whose underlying graph is a tree (i.e. contains no cycles or multiple edges), the only indecomposable representations have dimension vectors containing only $1$'s and $0$'s. Such representations correspond to connected subgraphs of $\Q$. 
 
 \begin{definition}
 We say that a quiver $\Q$ is a \emph{tree} if the underlying graph is a tree (i.e. contains no cycles or multiple edges). 
 \end{definition} 
 
 \medskip
 
 \begin{theorem} \label{ind_tree}
 Let $\Q$ be a tree, and $\V$ an indecomposable representation of $\Q$. Then the entries of the dimension vector $\dimvv(\V)$ are all $0$'s and $1$'s. Indecomposable representations of $\Q$ correspond to connected sub-graphs of $Q$. 
 \end{theorem}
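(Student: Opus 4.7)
My plan is to build a direct-sum decomposition of an arbitrary $\V \in \RepQ$ indexed by ``orbits'' of non-zero elements under the edge maps, and then to use the tree structure of $\overline{\Q}$ together with the partial-bijection property of morphisms in $\VFun$ to force each orbit to have at most one element at every vertex.

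First, I would introduce an equivalence relation $\sim$ on the disjoint union $\tilde\V := \coprod_{i \in \I}(V_i \setminus \{0_{V_i}\})$, generated by $x \sim f_h(x)$ whenever $x \in V_{h'}$ and $f_h(x) \neq 0_{V_{h''}}$. For each class $C$, set $V_i^C := (C \cap V_i) \cup \{0_{V_i}\}$ and $\V^C := (V_i^C, f_h|_{V_{h'}^C})$. Because each $f_h$ either kills an element of $V_{h'}^C$ or sends it to another element of $C$, $\V^C$ is a subrepresentation, and at each vertex the classes partition the non-zero elements, giving $\V = \bigoplus_C \V^C$. By the Krull-Schmidt Theorem \ref{KS_thm}, indecomposability of $\V$ is then equivalent to $\sim$ having a single equivalence class.

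The main obstacle, and the only place where the tree hypothesis enters, is to show that when $\V$ is indecomposable each $V_i$ contains at most one non-zero element. Assuming two distinct non-zero $v, v' \in V_i$, the single-class property supplies a walk $v = v_0, v_1, \ldots, v_n = v'$ witnessed by edges $h_1, \ldots, h_n$ whose endpoints trace a closed walk $i = i_0, \ldots, i_n = i$ in $\overline{\Q}$. Since $\overline{\Q}$ is a tree, any closed walk is not reduced, so it must contain a backtrack, i.e.\ an index $l$ with $h_l = h_{l+1}$ traversed in opposite directions, forcing $i_{l-1} = i_{l+1}$. At such a backtrack, $v_{l-1}$ and $v_{l+1}$ are either both pre-images of the non-zero $v_l$ under $f_{h_l}$ or are both equal to $f_{h_l}(v_l)$; the partial-bijection property of $f_{h_l}$ then forces $v_{l-1} = v_{l+1}$, allowing me to excise the pair $(v_l, v_{l+1})$ and shorten the walk. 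Iterating reduces to a walk of length zero, contradicting $v \neq v'$. Hence $\dimvv(\V) \in \{0,1\}^{|\I|}$.

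Finally, for the bijective correspondence, I would associate to an indecomposable $\V$ the subgraph $\Gamma(\V) \subset \Q$ with vertex set $\{i : V_i \neq 0\}$ and edge set $\{h : f_h \neq 0\}$; transitivity of the single equivalence class forces $\Gamma(\V)$ to be connected. Conversely, given any connected subgraph $\Gamma \subset \Q$, the representation $\V_\Gamma$ with $V_i = \k$ on vertices of $\Gamma$ and the unique non-zero $f_h$ on edges of $\Gamma$ (and zero data elsewhere) is indecomposable, since any non-trivial direct-sum decomposition would disconnect $\Gamma$. These assignments are mutually inverse up to isomorphism, establishing the stated correspondence.
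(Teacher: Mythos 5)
Your proposal is correct and follows essentially the same route as the paper: the same edge-generated equivalence relation on the underlying pointed set, the same direct-sum decomposition into classes, and the same identification of indecomposables with connected subgraphs. Your backtrack-excision argument (using that morphisms in $\VFun$ are partial bijections to collapse a non-reduced closed walk) is a careful filling-in of the step the paper dispatches by asserting that $v\sim v'$ in $V_i$ would produce a cycle, which strictly speaking only yields a closed walk; your version is the rigorous form of that claim.
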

 
 \begin{proof}
 We introduce an equivalence relation $\sim$ on the set underlying $\V$ (which we will also denote by $\V$) as the equivalence relation generated by the relation $\smile$, defined as follows.  For any edge $h \in \E$ and $v_{h'} \in V_{h'}$, we declare $v_{h'} \smile f_h(v_{h'}) \in V_{h''}$.  Denote by $[v]$ the equivalence class of $v \in \V$. It is clear that for every $v \in \V$, $[v]$ is a sub-representation of $\V$, and that $$\V = [v] \oplus \left( \V /[v] \right). $$ It follows that if $\V$ is indecomposable, then $\V=[v]$ for any non-zero $v \in \V$. This in turn implies that the dimension vector consists of $0$'s and $1$'s, since if $v, v'$ are different elements of $V_i$, and $v \sim v'$, there would have to be an (un-oriented) path from $i$ to itself - i.e. a cycle, contradicting that $\Q$ is a tree. Finally, suppose that there exists an edge $h \in E$ such that $dim(V_{h'}) = dim(V_{h''}) = 1$, and $f_h = 0$. Then $\V$ can clearly be decomposed as a direct sum of two representations, supported on either side of $h$. It follows that for each edge $h \in E$ connecting one-dimensional vector spaces, $f_h$ is an isomorphism. Thus, $\V$ corresponds to a connected sub-graph of $Q$. 
 
 \end{proof}
 
 \section{The Hall algebra of $\RepQn$} \label{Hall_alg}

We proceed to define the Hall algebra of the category $\RepQ$ along the lines of \cite{S}.  Let $\on{Iso}(Q)$ denote the set of isomorphism classes of representations in $\RepQ$, and let us choose a representative for each. We will henceforth abuse notation and identify isomorphism classes with their representatives. Let 
\[
\mathbf{Z}_2 (Q) := \{ (X,R) | R \in \on{Iso}(\Q), X \subset R \}
\]
i.e. elements of $\mathbf{Z}_2 (\Q)$ are pairs of an isomorphism class and a subobject. Let  
\begin{equation}
\HQ := \{ f: \on{Iso}(\Q) : \rightarrow \mathbb{C} \vert  \# (\on{supp}(f) ) < \infty \}
\end{equation}
For $M \in \on{Iso}(\Q)$, we denote by $[M] \in \HQ$ the delta-function supported on the isomorphism class of $M$. $\HQ$ is thus the $\mathbb{C}$--vector space spanned by the symbols $[M]$. $\HQ$ can be given a product by:
\begin{equation} \label{prod}
 (f \cdot g)(M) = \sum_{ (L, M) \in \mathbf{Z}_2 (Q)} f(M/L) g(L) \; \; \;  M, L \in \on{Iso}(\Q)  
\end{equation}
This product is well-defined, since every object in $\RepQ$ has finitely many sub-objects, and is easily seen to be associative (see e.g. \cite{S}).  In the basis of the delta functions, we may write more explicitly:
\[
[M] \dot [N] = \sum_{R \in \on{Iso}(\Q)} \frac{\mathbf{P}^{R}_{M,N}}{a_M a_N} [R]
\]
where $$\mathbf{P}^{R}_{M,N} = \# \{ \textrm{ short exact sequences } \mathbb{O} \rightarrow N \rightarrow R \rightarrow M \rightarrow \mathbb{O} \}$$
and $a_{M} = \# \on{Aut}(M)$.  $\HQ$ carries a grading by the effective cone  \mbox{$K^+_0 (\RepQ) \simeq \mathbb{Z}_{\geq 0}^{|\I|}$} in $K_0 (\RepQ)$, by assigning to $[M]$ its class in $K_0 (\RepQ)$. This grading is clearly  preserved by the product \ref{prod}, and we may write
\[
\HQ = \oplus_{\alpha \in K^+_0 (\RepQ)} \HQ[\alpha],
\]
in terms of the graded pieces. 
$\HQ$ may be equipped with a coproduct \mbox{$\Delta: \HQ \rightarrow \HQ \otimes \HQ$} defined by
\begin{equation} \label{coprod}
\Delta(f)([M],[N]) = f([M \oplus N])  
\end{equation}
 
 \begin{lemma}
 $\Delta$ is co-associative, i.e. $(\Delta \otimes \on{Id}) \Delta = ( \on{Id} \otimes \Delta) \Delta$
 \end{lemma}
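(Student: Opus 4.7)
The plan is to verify coassociativity by direct computation, evaluating both $(\Delta \otimes \on{Id})\Delta(f)$ and $(\on{Id} \otimes \Delta)\Delta(f)$ on an arbitrary triple of isomorphism classes $([M],[N],[P])$ in $\on{Iso}(\Q)^3$ and showing both equal $f([M \oplus N \oplus P])$. Since $\HQ \otimes \HQ \otimes \HQ$ is spanned by the delta functions $[M] \otimes [N] \otimes [P]$, and functions on $\on{Iso}(\Q)^3$ with finite support are determined by their values on such triples, this will suffice.

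First I would unpack what it means to apply $\Delta$ to a single tensor factor. Viewing $\HQ \otimes \HQ$ as a subspace of finitely supported functions on $\on{Iso}(\Q) \times \on{Iso}(\Q)$, the map $\Delta \otimes \on{Id}$ sends $F([X],[Y])$ to the function on triples whose value on $([M],[N],[P])$ equals $F([M \oplus N], [P])$, applying $\Delta$ in the $X$-slot. Symmetrically, $\on{Id} \otimes \Delta$ produces $F([M], [N \oplus P])$ on triples. Applying this to $F = \Delta(f)$ and using the defining formula $\Delta(f)([X],[Y]) = f([X \oplus Y])$ twice, I obtain
\[
(\Delta \otimes \on{Id})\Delta(f)\bigl([M],[N],[P]\bigr) = \Delta(f)\bigl([M \oplus N],[P]\bigr) = f\bigl([(M \oplus N) \oplus P]\bigr),
\]
and
\[
(\on{Id} \otimes \Delta)\Delta(f)\bigl([M],[N],[P]\bigr) = \Delta(f)\bigl([M],[N \oplus P]\bigr) = f\bigl([M \oplus (N \oplus P)]\bigr).
\]

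The remaining step is to invoke associativity of the direct sum $\oplus$ in $\RepQ$, i.e.\ the canonical isomorphism $(M \oplus N) \oplus P \simeq M \oplus (N \oplus P)$, which follows immediately from the symmetric monoidal structure on $\VFun$ coordinatewise on vertices (property (4) of $\VFun$). Since $f$ is a function on isomorphism classes, its value depends only on the class $[M \oplus N \oplus P]$, so both expressions agree.

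There is no real obstacle here; the only subtlety worth noting is the identification of $\HQ \otimes \HQ$ with a subspace of finitely supported functions on $\on{Iso}(\Q)^2$ (and analogously for the triple tensor) that justifies the slot-wise application of $\Delta$ used above. Once this bookkeeping is made explicit, the rest is a one-line reduction to associativity of $\oplus$.
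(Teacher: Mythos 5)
Your proof is correct and is essentially the paper's argument viewed dually: the paper expands $\Delta([M])$ in the delta-function basis as a sum over ordered decompositions $D_3(M)$, while you evaluate $\Delta(f)$ on triples of isomorphism classes; both reduce to the fact that a three-fold splitting $M \simeq A \oplus B \oplus C$ is independent of the bracketing of $\oplus$. The bookkeeping identification of $\HQ \otimes \HQ$ with finitely supported functions on $\on{Iso}(\Q)^2$ that you flag is the same one the paper uses implicitly, so there is no gap.
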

 
 \begin{proof}
 For $M \in \on{Iso}(Q)$, let  $$D_n (M) := \{ (A_1, \cdots, A_n) \in \on{Iso}(\Q)^n | A_1 \oplus \ldots \oplus A_n \simeq M \}$$
 We have 
 \[
 \Delta([M]) = \sum_{ (A,B) \in D_2 (M)} [A] \otimes [B]
 \]
 and 
 \[
 (\Delta \otimes \on{Id}) \Delta = \sum_{(A,B,C) \in D_3 (M)} [A] \otimes [B] \otimes [C] = ( \on{Id} \otimes \Delta) \Delta.
 \]
 \end{proof}
Note that $\Delta$ is compatible with the $K_0(\RepQ)$--grading on $\HQ$. Finally, we check that the multiplication and co-multiplication are compatible. 
 
 \begin{lemma}
$ \Delta([M] \cdot [N]) = \Delta([M]) \Delta([N])$ \; \; for $[M], [N] \in \on{Iso}(\Q)$
 \end{lemma}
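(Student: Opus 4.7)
The plan is to verify the identity by computing the coefficient of $[A] \otimes [B]$ on each side, for each ordered pair of isomorphism classes $(A,B) \in \on{Iso}(\Q)^2$, and checking they agree.

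First I would introduce the abbreviation $g^R_{M,N} := \mathbf{P}^R_{M,N}/(a_M a_N)$; unpacking a short exact sequence $\mathbb{O} \to N \to R \to M \to \mathbb{O}$ shows that $g^R_{M,N}$ simply counts the sub-representations $L \subset R$ with $L \simeq N$ and $R/L \simeq M$ (choices of the injection $N \hookrightarrow L$ and surjection $R/L \twoheadrightarrow M$ contribute the factors $a_N$ and $a_M$). So $[M] \cdot [N] = \sum_R g^R_{M,N}[R]$. Applying $\Delta$ term by term and noting that the only $R \in \on{Iso}(\Q)$ with $R \simeq A \oplus B$ is $R = A \oplus B$ itself, the coefficient of $[A] \otimes [B]$ in $\Delta([M] \cdot [N])$ is exactly $g^{A \oplus B}_{M,N}$.

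On the right-hand side, using the componentwise product on $\HQ \otimes \HQ$, one has
$$\Delta([M]) \Delta([N]) = \sum_{(A_1, B_1) \in D_2(M)} \sum_{(A_2, B_2) \in D_2(N)} ([A_1] \cdot [A_2]) \otimes ([B_1] \cdot [B_2]),$$
and expanding both inner products gives the coefficient of $[A] \otimes [B]$ as
$$\sum_{\substack{(A_1, B_1) \in D_2(M) \\ (A_2, B_2) \in D_2(N)}} g^A_{A_1, A_2} \, g^B_{B_1, B_2}.$$
Thus the lemma reduces to the combinatorial identity
$$g^{A \oplus B}_{M,N} \;=\; \sum_{\substack{(A_1, B_1) \in D_2(M) \\ (A_2, B_2) \in D_2(N)}} g^A_{A_1, A_2} \, g^B_{B_1, B_2}.$$

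The key step is to prove this identity via a bijection, which is where property (5) of $\RepQ$ does all the work: every sub-representation $L \subset A \oplus B$ decomposes as $L = (L \cap A) \oplus (L \cap B)$. Writing $L_A := L \cap A$ and $L_B := L \cap B$, and using that cokernels are computed componentwise, one obtains $L \simeq L_A \oplus L_B$ and $(A \oplus B)/L \simeq (A/L_A) \oplus (B/L_B)$. Thus a subobject $L \subset A \oplus B$ with $L \simeq N$ and $(A \oplus B)/L \simeq M$ determines, and is determined by, a pair $(L_A, L_B)$ with $L_A \subset A$, $L_B \subset B$, such that $[L_A] \oplus [L_B] \simeq N$ and $[A/L_A] \oplus [B/L_B] \simeq M$. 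Partitioning by the isomorphism classes $A_1 := [A/L_A]$, $B_1 := [B/L_B]$, $A_2 := [L_A]$, $B_2 := [L_B]$ and counting gives exactly the right-hand side. The main (and essentially only) obstacle is to verify that this correspondence is indeed a bijection; this rests entirely on property (5), which is the feature of $\VFun$ that makes product and coproduct compatible on the nose here, in contrast to the abelian setting of Ringel--Green where a twist by an inner product is required.
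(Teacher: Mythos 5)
Your proof is correct and follows essentially the same route as the paper: both reduce the identity to matching the terms on each side and then invoke property (5), namely $\mathbb{U} = (\mathbb{U}\cap\V)\oplus(\mathbb{U}\cap\W)$ for $\mathbb{U}\subset\V\oplus\W$, to set up the bijection $L \mapsto (L\cap A, L\cap B)$ between subobjects of $A\oplus B$ and pairs of subobjects. The only difference is organizational — you compare coefficients of each fixed $[A]\otimes[B]$, while the paper exhibits a bijection between the two global index sets of terms — and this does not change the substance of the argument.
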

 
 \begin{proof}
 For the left-hand side we have
 \begin{align*}
 \Delta([M] \cdot [N]) &= \Delta \left( \sum_{ \substack{ (X,R) \in \mathbf{Z}_2 (Q) \\ X \simeq N, R/X \simeq M }} [R] \right)\\
                               &= \sum_{ \substack{ (X,R) \in \mathbf{Z}_2 (Q) \\ X \simeq N, R/X \simeq M }} \left( \sum_{(A,B) \in D_2(R)} [A] \otimes [B] \right)                               
\end{align*}
 The terms appearing in the final expression are therefore in bijection with the set of ordered tuples of elements in $\mathbf{Z}_2 (Q) \times \on{Iso}(Q) \times \on{Iso}(Q)$
 \[
 \Pi := \{ [(X,R),A,B] | X \simeq N, R/X \simeq M, A \oplus B \simeq R \}
 \]
The right hand side yields
\begin{align*}
\Delta([M]) \Delta([N]) &= \sum_{(A_M, B_M) \in D_2 (M) } \sum_{(A_N, B_N) \in D_2 (N)} A_M \cdot A_N  \otimes B_M \cdot B_N \\
                                   &= \sum_{(A_M, B_M) \in D_2 (M) } \sum_{(A_N, B_N) \in D_2 (N)} \sum _{  \substack {(V, F) \in \mathbf{Z}_2 (Q) \\ V \simeq A_N, F/V \simeq A_M } } \sum _{ \substack {(G, W) \in \mathbf{Z}_2 (Q) \\ G \simeq B_N, W/G \simeq B_M  }} V \otimes W
\end{align*}
 The terms appearing in the final expression are therefore in bijection with the set of ordered tuples of elements in $\mathbf{Z}_2 (Q) \times \mathbf{Z}_2 (Q)$
 \[
 \Omega := \{ [(V,F),(W,G)] | V \oplus W \simeq N, F/V \oplus G/W \simeq M \}
 \]
 By \ref{dir_sum_property}, if $X \subset A \oplus B$, then $X \simeq X \cap A \oplus X \cap B$. We can therefore define a map $j: \Pi \rightarrow \Omega$ given by $j( [(X,R),A,B] ) = [(X \cap A, A), (X \cap B, B)]$, which is easily seen to be a bijection. 

\end{proof}

$\HQ$ is thus a graded, connected, and co-commutative Hopf algebra. By the Milnor-Moore theorem, it is the universal enveloping algebra of the graded Lie algebra $\n_Q$ of its primitive elements, i.e. $\HQ \simeq \U(\n_Q)$. It follows from the definition of the coproduct \ref{coprod} that $[M] \in \HQ$ is primitive iff $M$ is indecomposable. We therefore obtain:

\begin{theorem} \label{Hall_isom}
$\HQ \simeq \U(\n_Q)$, where $\n_Q$ is the pro-nilpotent Lie algebra spanned by $[M]$, for $M \in \on{Iso}(\Q)$ indecomposable. 
\end{theorem}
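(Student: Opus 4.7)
The plan is essentially to invoke the Milnor--Moore theorem and then identify the primitives. The preceding lemmas, together with the obvious unit $[\mathbb{O}]$ and the counit sending $[\mathbb{O}] \mapsto 1$ and $[M] \mapsto 0$ for $M \neq \mathbb{O}$, exhibit $\HQ$ as a graded cocommutative bialgebra over $\mathbb{C}$. The grading is by $K_0^+(\RepQ) \simeq \mathbb{Z}_{\geq 0}^{|\I|}$ with one-dimensional degree-zero piece spanned by $[\mathbb{O}]$, so $\HQ$ is connected; an antipode then arises automatically from the standard inductive formula. The Milnor--Moore theorem (valid over $\mathbb{C}$ since we are in characteristic zero) produces a canonical Hopf algebra isomorphism $\HQ \simeq \U(P(\HQ))$, where $P(\HQ)$ denotes the Lie algebra of primitive elements.

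What remains is to show $P(\HQ) = \n_Q$, i.e., that the primitives are precisely the linear span of the classes of indecomposable objects. The coproduct formula
$$\Delta([M]) = \sum_{(A,B) \in D_2(M)} [A] \otimes [B]$$
derived in the lemma above does all the work. If $M$ is indecomposable, Krull--Schmidt (Theorem \ref{KS_thm}) forces $D_2(M) = \{(\mathbb{O}, M), (M, \mathbb{O})\}$, so $[M]$ is primitive. Conversely, suppose $f = \sum_M c_M [M]$ is primitive. Extracting the coefficient of $[A_0] \otimes [B_0]$ for fixed nonzero $A_0, B_0$ from the identity $\Delta(f) = f \otimes [\mathbb{O}] + [\mathbb{O}] \otimes f$ gives
$$\sum_{M :\, (A_0, B_0) \in D_2(M)} c_M = 0.$$
By Krull--Schmidt, $(A_0, B_0) \in D_2(M)$ forces $M \simeq A_0 \oplus B_0$, so this sum has a single term and yields $c_{A_0 \oplus B_0} = 0$. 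Hence $c_M$ vanishes whenever $M$ is decomposable, and $f \in \n_Q$.

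I do not anticipate a genuine obstacle: the argument is assembly of machinery already established in the previous sections. The only hypotheses that warrant brief care are that Milnor--Moore requires characteristic zero (satisfied by $\mathbb{C}$) and that the graded pieces be manageable (they are finite-dimensional here, since for each dimension vector $\alpha \in K_0^+(\RepQ)$ there are only finitely many isomorphism classes in $\RepQ$ of that dimension). The final sentence of the theorem about $\n_Q$ being pro-nilpotent reflects the grading: $\n_Q = \bigoplus_{\alpha \neq 0} \n_Q[\alpha]$ with $[\n_Q[\alpha], \n_Q[\beta]] \subseteq \n_Q[\alpha + \beta]$, so it is naturally pro-nilpotent with respect to the filtration by total degree.
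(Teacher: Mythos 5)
Your proposal is correct and follows the same route as the paper: establish that $\HQ$ is a graded connected cocommutative Hopf algebra, apply Milnor--Moore, and identify the primitives with the span of the indecomposable classes via the coproduct formula $\Delta([M]) = \sum_{(A,B)\in D_2(M)}[A]\otimes[B]$ together with Krull--Schmidt. The paper states the identification of primitives in one line, so your explicit verification of both directions simply fills in the details the paper leaves to the reader.
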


It follows from this in particular that $\HQ$ is generated as an algebra by $\n_Q$. 

 \subsection{The extended Hall algebra $\HQe$} \label{HQe}
 
 We may extend $\HQ$ by adjoining to it a degree zero piece coming from $K_0 (\RepQ)$.
 We define $\h_{\Q} = \on{Hom}_{\mathbb{Z}} (K_0 (\RepQ), \mathbb{C})$, and so obtain canonical identifications \mbox{$\h^*_{\Q} \simeq K_0 (\RepQ) \otimes_{\mathbb{Z}} \mathbb{C}$}, and $\on{Sym}(\h_{\Q}) \simeq \mathbb{C}[K_0 (\RepQ) \otimes_{\mathbb{Z}} \mathbb{C}]$ (where for an affine variety $Y$,  $\mathbb{C}[Y]$ denotes the ring of regular functions on $Y$). Let now
 \begin{equation}
 \HQe := \on{Sym}(\h_{\Q}) \otimes_{\mathbb{C}} \HQ
 \end{equation}
 on which we impose the relations
 \begin{equation} \label{relation}
 [Z,f] := Z(\alpha) f \; \textrm{ for } Z \in \h_{\Q},  f \in \HQ[\alpha]
 \end{equation}
 With this twisted multiplication, $\HQe$ is easily seen to be isomorphic as an associative algebra to $\U(\b_{\Q})$, where as a vector space $$ \b_{\Q} := \h_{\Q} \oplus \n_{\Q},$$ with Lie bracket between $\h_{\Q}$ and $\n_{\Q}$ given by \ref{relation}. As $\U(\b_{\Q})$ is a Hopf algebra containing $\U(\n_{\Q}) \simeq \HQ$, we may extend the coproduct \ref{coprod} to $\HQe$ by requiring that for $Z \in \h_{\Q}$,  $$\Delta(Z) = Z \otimes 1 + 1 \otimes Z \in \HQe \otimes \HQe,$$
 and using the fact that $\Delta$ is an algebra homomorphism. 
 
 \begin{remark} \label{which_category}
 The Hall algebra construction can equally well be applied to the full subcategory $\RepQn \subset \RepQ$ (and more generally to any \emph{finitary} category, see \cite{R, S}). In the following sections, we will always restrict our attention to the Hall algebra of this smaller category.  Theorem \ref{Hall_isom} remains unchanged, with $\n_{\Q}$ now spanned by the isomorphism classes of the indecomposable nilpotent representations. Recall that $\RepQn$ differs from $\RepQ$ only for those quivers that have cycles. 
 \end{remark}
 
 \section{Recollections on Kac-Moody algebras} \label{KM}
 
 Given a quiver $\Q$, denote by $\overline{\Q}$ the underlying unoriented graph. We will assume that $\Q$ has no self-loops (but cycles are allowed).  We may attach to $\overline{\Q}$ a symmetric (i.e. simply laced) Kac-Moody algebra $\g=\g(\overline{\Q})$ as follows (see \cite{Kac} for details regarding statements in this section). Let $A$ be the \emph{generalized Cartan matrix} corresponding to $\overline{\Q}$; that is, the symmetric matrix $r \times r$ matrix whose diagonal entries are $2$, and where the off-diagonal entries 
 $$ a_{ij} = A_{ij} := - \# \{ \textrm{ edges joining } i \textrm{ with } j  \textrm{ in either direction } \}, \; \forall i,j \in I. $$
 A \emph{realization} of $A$ consists of a pair of complex vector spaces $\h, \h^*$ containing sets of vectors $$\Pi = \{ \alpha_1, \cdots, \alpha_r \} \subset \h^* \textrm{ and } \Pi^{\spcheck} = \{ h_1, \cdots, h_r \} \subset \h$$ satisfying the following three conditions:
 \begin{enumerate}
 \item $\Pi, \Pi^{\spcheck}$ are linearly independent
 \item $dim(\h) = 2r - rank(A)$
 \item $\alpha_i (h_j) = a_{ji}$
 \end{enumerate} 
 The Kac-Moody algebra $\g=\g(\overline{\Q})$ associated to $A$ is the Lie algebra generated by elements $\{e_i, f_i, h  | i \in \I, h \in \h \}$ subject to the relations
 \begin{align} \label{Serre_relations}
  & [h,h'] = 0 \\
 & [h,e_j] = \alpha_j (h) f_j \notag \\
 & [h, f_j] = - \alpha_j (h) f_j \notag \\
 & [e_i, f_j] = \delta_{ij} h_i \notag \\
 & ad^{1-a_{ij}}(e_i)(e_j) = 0 \notag \\
 & ad^{1-a_{ij}}(f_i)(f_j) = 0 \notag
  \end{align}
 We denote by $\n_+, (\textrm{ resp.  } \n_{-})$ the subalgebras generated by the $e_i, ( \textrm{ resp. } f_i)$. $\g$ has a triangular decomposition
\[
\g = \n_{-} \oplus \h \oplus \n_{+}
\] 
which induces at the level of enveloping algebras the decomposition (as vector spaces)
\[
\U(\g) = \U(\n_{-}) \otimes_{\mathbb{C}} \U(\h) \otimes_{\mathbb{C}} \U(\n_+)
\]
The Lie algebra $\n_{+}$ (resp. $\n_{-}$) can be described without reference to $\g$ as the Lie algebra generated by $\{ e_i \}, \; i \in \I$ (resp. $ \{ f_i \}, \; i \in \I $) subject to the second-to last (resp. last) of the relations \ref{Serre_relations}.  We will find it useful to consider below also the derived subalgebra $\g'=[\g,\g]$. It is known that $$\g' = \n_{-} \oplus \bigoplus^r_{i=1} \mathbb{C} h_i \oplus \n_{+}$$ and so is generated by $e_i, f_i, \textrm{ and } h_i, \; i \in \I$.  The subalgebra $\b = \h \oplus \n_{+} \subset \g$ is called the standard Borel subalgebra, and $\b' = \bigoplus^r_{i=1} \mathbb{C} h_i \oplus \n_{+}$ the derived Borel subalgebra.


 \section{The homomorphism $\rho: \U(\b') \rightarrow \HQe $} \label{rho}
 
In this section, we construct a Hopf algebra homomorphism $\rho : \U(\b') \rightarrow \HQe$ where $\b'$ is the derived Borel subalgebra of the symmetric Kac-Moody algebra $\g = \g(\overline{\Q})$ with Dynkin diagram the underlying unoriented graph of $\Q$, and $\HQe$ is the extended Hall algebra of the category $\RepQn$. Let $Z_i \in \h_{\Q}$ be element uniquely determined by the requirement that $Z_i (\alpha_j) = a_{ji}$. 

\begin{theorem} \label{rho_theorem}
The assignment $e_i \rightarrow [S_i], h_i \rightarrow Z_i$ extends to a Hopf algebra homomorphism 
\[
\rho: \U(\b') \rightarrow \HQe
\]
\end{theorem}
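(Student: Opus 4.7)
The plan is to verify that the assignment respects the defining relations of $\U(\b')$---namely $[h_i,h_j]=0$, $[h_i,e_j]=a_{ij}e_j$, and the Serre relations $(\on{ad}\,e_i)^{1-a_{ij}}(e_j) = 0$ for $i\neq j$---and is compatible with the coproducts on both sides; the universal property of $\U(\b')$ will then produce the desired Hopf algebra homomorphism into $\HQe$.

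The first two families of relations are immediate. Since $\h_{\Q}$ is commutative, $[Z_i,Z_j]=0$. Since $[S_j]\in\HQ[\alpha_j]$, the twisted commutation relation \ref{relation} yields $[Z_i,[S_j]] = Z_i(\alpha_j)[S_j] = a_{ji}[S_j] = a_{ij}[S_j]$ by symmetry of $A$, matching $[h_i,e_j]=a_{ij}e_j$. For coproduct compatibility it suffices, since $e_i$ and $h_i$ are primitive in $\U(\b')$, to observe that their images are also primitive in $\HQe$: $Z_i$ by the extension of $\Delta$ to $\on{Sym}(\h_{\Q})$ defined in section~\ref{HQe}, and $[S_i]$ because formula \ref{coprod} together with the indecomposability of $S_i$ forces $\Delta([S_i]) = [S_i]\otimes 1+1\otimes[S_i]$.

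The main obstacle is the Serre relation $(\on{ad}\,[S_i])^{1-a_{ij}}([S_j]) = 0$ in $\HQ$. My approach avoids any direct Hall-number computation by exploiting the Hopf structure: the primitive elements of $\HQ$ form a Lie subalgebra, so iterated commutators of primitives remain primitive, giving $(\on{ad}\,[S_i])^k([S_j]) \in \n_{\Q}$ for every $k$. By Theorem~\ref{Hall_isom}, the graded piece of $\n_{\Q}$ in degree $k\alpha_i+\alpha_j$ is spanned by the classes $[M]$ of indecomposable objects of $\RepQn$ of that dimension vector. Hence it suffices to show that no indecomposable nilpotent representation has dimension vector $k\alpha_i+\alpha_j$ as soon as $k\ge 1-a_{ij} = 1+n_{ij}$, where $n_{ij}$ is the total number of edges joining $i$ and $j$ in either direction.

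For this nonexistence statement I would adapt the equivalence-relation argument from the proof of Theorem~\ref{ind_tree}. Let $\V$ be such a representation, let $y$ be the unique nonzero element of $V_j$, and let $[y]$ denote its $\sim$-equivalence class under the relation generated by $w\smile f_h(w)$. One verifies that $[y]$ and its vertex-wise pointed complement are both sub-representations of $\V$ providing a direct sum decomposition $\V = \mathbb{U}\oplus\mathbb{U}'$: the complement is closed under every edge map because $f_h(x)\in[y]$ forces $x\smile f_h(x)$, hence $x\in[y]$. Indecomposability of $\V$ then forces $\mathbb{U}' = 0$, so every nonzero element of $V_i$ must lie in $[y]$. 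But because $V_j$ contains only the single nonzero element $y$, any $x\in V_i$ can be $\sim$-equivalent to $y$ only through one edge: each of the $d_{ij}$ edges $i\to j$ contributes at most one such $x$ (the edge map being a partial bijection into a one-element set), and each of the $d_{ji}$ edges $j\to i$ contributes at most one (namely the image $f_h(y)\in V_i$). Thus $\dim V_i \le d_{ij}+d_{ji} = n_{ij}$, contradicting $k\ge 1+n_{ij}$ and completing the proof.
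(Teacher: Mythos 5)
Your proposal is correct, but it establishes the key Serre relation $(\on{ad}[S_i])^{1-a_{ij}}([S_j])=0$ by a genuinely different route from the paper. The paper's proof is a direct Hall-number computation: it expands the iterated commutator in divided powers, identifies the coefficient $G^M_{l,n}$ of $[M]$ in $[S_i]^{(l)}[S_j][S_i]^{(n)}$ as the number of three-step filtrations, computes this number explicitly as a binomial coefficient $\binom{u_M-v_M}{n-v_M}$ via the subspaces $V_M\subset F^1M\subset U_M$, and concludes by the vanishing of an alternating binomial sum. You instead argue ``softly'': iterated commutators of primitives are primitive, hence the element in question lies in the graded piece $\n_{\Q}[(1-a_{ij})\alpha_i+\alpha_j]$, which by Theorem \ref{Hall_isom} (and Remark \ref{which_category}) is spanned by indecomposables of that dimension vector; you then rule these out by adapting the equivalence-class decomposition from the proof of Theorem \ref{ind_tree}, showing that at most $-a_{ij}$ elements of $V_i$ can be linked to the unique nonzero element of $V_j$ through the $-a_{ij}$ available edges (each edge map being a partial bijection). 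Your route leans more heavily on the Hopf structure --- in particular on the compatibility lemma $\Delta([M]\cdot[N])=\Delta([M])\Delta([N])$ and the Milnor--Moore identification, both of which the paper proves before this point, so nothing circular occurs --- and it yields as a byproduct a nonexistence statement for indecomposables of dimension vector $k\alpha_i+\alpha_j$, $k>-a_{ij}$, of independent interest; the paper's computation is more explicit and produces the actual structure constants. One small point to make explicit: the relation $v\smile f_h(v)$ must be imposed only when $f_h(v)\neq 0$ (otherwise the complement of $[y]$ need not be a complement); this convention is implicit in the paper's proof of Theorem \ref{ind_tree} as well, and with it your decomposition $\V=[y]\oplus\mathbb{U}'$ and the counting bound $\dim V_i\le -a_{ij}$ go through. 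The verifications of the degree-zero relations and of primitivity of $[S_i]$ and $Z_i$ coincide with the paper's.
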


\begin{proof}
To verify that $\rho$ is an algebra homorphism, we must check that the images of $e_i, h_i \; i \in \I$ satisfy the relations \ref{Serre_relations}.  
Only the first, second, and fifth of the relations \ref{Serre_relations} are relevant.  The first two are immediate - it follows by construction that:
\begin{enumerate}
\item $[Z_i, Z_j] =0$ 
\item $[Z_i, [S_j]] = Z_i (\alpha_j) = a_{ji}$
\end{enumerate}
We have to verify the fifth relation, $ad^{1-a_{ij}}([S_i])([S_j]) = 0$, which may be written as 
\begin{equation} \label{main_relation}
\sum^{1-a_{ij}}_{l=0} (-1)^l \binom{1-a_{ij}}{l} [S_i]^l [S_j] [S_i]^{1-a_{ij} - l} = 0. 
\end{equation}
Let us introduce the divided powers $[S_i]^{(l)} = \frac{[S_i]^l}{l !} = [S^{\oplus l}_i]$. In terms of these, the relation \ref{main_relation} reads
\begin{equation} \label{main_relation2}
\sum^{1-a_{ij}}_{l=0} (-1)^l [S_i]^{(l)} [S_j] [S_i]^{(1-a_{ij} - l)} = 0. 
\end{equation}
For non-negative integers $l,n$ let us write
\[
[S_i]^{(l)} [S_j] [S_i]^{(n)}  = \sum_{[M] \in \on{Iso}(Q)} G^{M}_{l,n} [M]
\]
The non-negative integer $G^{M}_{l,n}$ is the number of three-step filtrations 
\begin{equation} \label{filtration}
 \emptyset \subset F^1 M \subset F^2 M \subset F^3 M = M 
 \end{equation}
such that $F^1 M \simeq S^{\oplus n}_i, \; F^2 M / F^1 M \simeq S_j, \; \textrm{ and } F^3 M / F^2 M \simeq S^{\oplus l}_I$.  
Let 
\[
U_M = \bigcap_{\substack{ h \in \E \\ h' = i, h''=j}} \on{Ker}(f_h)
\]
and
\[
V_M = \bigcup_{\substack{ h \in \E \\ h' = j, h''=i}} \on{Im}(f_h),
\]
and denote their dimensions by $u_M$ and $v_M$ respectively. 
In order for a filtration \ref{filtration} to exist, it is necessary and sufficient that $$ V_M \subset F^1 M \subset U_M, $$ and so there are 
$\binom{u_M - v_M}{ n - v_M}$ such filtrations.  It follows that for a fixed $M$,
\begin{align*}
\sum^{1-a_{ij}}_{l=0} (-1)^l G^M_{l,1-a_{ij} - l}  & = \sum^{1-a_{ij}}_{l=0} (-1)^l \binom{u_M - v_M}{1-a_{ij} - l - v_M} \\
&= \pm \sum^{u_M - v_M}_{k=0} (-1)^k \binom{u_M - v_M}{k} \\
& = 0
\end{align*}
We have shown that $\rho$ is a homomorphism of associative algebras. To see that it is a Hopf algebra homomorphism, it suffices to observe that $e_i, h_i, \; i \in \I$ (resp. $[S_i], Z_i$) are primitive, and since the former generate $\U(\b')$, the statement follows.
\end{proof}

\begin{remark}
Restricting $\rho$ to $\U(\n_+)$ we obtain a Hopf algebra homomorphism $$\rho': \U(\n_+) \rightarrow \HQ .$$ Since both algebras are enveloping algebras, we see that $\rho$ and $\rho'$ are determined by the corresponding Lie algebra homorphism $$ \rho_{Lie} : \b_+ \rightarrow \b_{\Q}.$$
\end{remark}

The homomorphism $\rho$ need not be injective, as can be seen from the following:
 
 \begin{example} \label{D_4}
 \setlength{\unitlength}{1pt}
 We consider the example of $D_4$, with the Dynkin quiver oriented as follows:
 \begin{center}
 \begin{picture}(120,120)
 \put(50,50){\circle*{5}}
 \put(0,100){\circle*{5}}
 \put(100,100){\circle*{5}}
 \put(50,-10){\circle*{5}}
 \put(2,98){\vector(1,-1){45}}
 \put(98,98){\vector(-1,-1){45}}
 \put(50,-8){\vector(0,1){53}}
 \put(5,98){$1$}
 \put(105,98){$3$}
 \put(55,-9){$4$}
 \put(55,45){$2$}
 \end{picture}
 \end{center}
 
 \vspace{1cm}
 
 The root system is 
 \begin{eqnarray*}
 \{ \alpha_1, \alpha_2, \alpha_3, \alpha_4 \}  \cup \{ \alpha_1+\alpha_2, \alpha_2+\alpha_3, \alpha_2+\alpha_4 \} \\
  \cup \{ \alpha_1 + \alpha_2 + \alpha_4, \alpha_1 + \alpha_2 + \alpha_3, \alpha_2 + \alpha_3 + \alpha_4 \}  \\
 \cup \{ \alpha_1 + \alpha_2 + \alpha_3 + \alpha_4,   \alpha_1 + 2 \alpha_2 + \alpha_3 + \alpha_4 \}  \\
 \end{eqnarray*}
As before, we denote by $S_i, i=1 \ldots 4$ the simple one-dimensional representation supported at $i$, by $R_{ij}, R_{jik}, \textrm{ etc. }$ the unique indecomposable representation supported on vertices $i, j$ (resp. $i,j,k$ etc.) with one-dimensional spaces at each vertex and all maps between non-zero spaces isomorphisms. The list of indecomposable elements of $\RepQ$ is 
\[
\{S_1, S_2, S_3, S_4 \} \cup \{ R_{12}, R_{23}, R_{24} \} \cup \{ R_{123}, R_{124}, R_{234} \} \cup \{ R_{1234} \}
\]
Thus, the indecomposable corresponding to the maximal root $\alpha_1 + 2 \alpha_2 + \alpha_3 + \alpha_4$ is missing over $\mathbb{F}_1$. In $\on{Rep}(\Q, \mathbb{F}_q)$, this indecomposable corresponds to three distinct lines in $\mathbb{F}^{\oplus 2}_q$, or in other words, three points of $\mathbb{P}^1_{\mathbb{F}_q}$, but over $\mathbb{F}_1$, this cannot happen. 

We denote by $e_{i}, e_{ij}, e_{ijk}$ basis vectors for the one-dimensional root spaces $\alpha_i, \alpha_i+\alpha_j, \alpha_i + \alpha_j + \alpha_k $ etc. We may choose
\[
\begin{array}{l l l l}
e_1 = E_{12} - E_{65}  & e_{12} = E_{24} - E_{86} & e_{123} = E_{14} - E_{85} & e_{1234} = E_{17} - E_{35} \\
e_2  = E_{23} - E_{76} & e_{23} = E_{24} - E_{86} & e_{124} = E_{18} - E_{45} & e_{12^2 34} = E_{16} - E_{25} \\ 
e_3  = E_{34} - E_{87}  & e_{24} = E_{28} - E_{46}& e_{234} = E_{27} - E_{36} & \\
e_4  = E_{38} - E_{47}  & & & \\
\end{array} 
\]

Starting with $\rho(e_i) = S_i$ we obtain that $\rho(e_{ij}) = R_{ij}$, $\rho(e_{ijk}) = R_{ijk}$, $\rho(e_{1234}) = R_{1234}$, and $\rho(e_{12^234}) = 0$. Thus $Ker(\rho)$ is the two-sided ideal generated by $e_{12^234}$. 

 \end{example}


\begin{definition}
The Hopf sub-algebras $\CQ := \on{Im}(\rho') \subset \HQ$, $\CQe :=\on{Im}(\rho) \subset \HQe$ are called the \emph{composition} and \emph{extended composition} algebras respectively. 
\end{definition}

In general, $\CQ, \CQe$ are proper Hopf sub-algebras (see the example of the cyclic quiver below). In the case when $\Q$ is a tree, $\rho'$ and $\rho$ are surjective:

\begin{theorem}
$\rho$ and $\rho'$ are surjective whenever $\Q$ is a tree. 
\end{theorem}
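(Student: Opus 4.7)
The plan is to exploit Theorems \ref{Hall_isom} and \ref{ind_tree} to reduce the problem to showing that $[R_T] \in \on{Im}(\rho')$ for every connected subtree $T \subset \Q$, and then to realize $[R_T]$ inductively as (up to sign) the commutator $[S_i]\cdot[R_{T'}] - [R_{T'}]\cdot[S_i]$, where $i$ is a leaf of $T$ and $T' = T \setminus \{i\}$. Indeed, by Theorem \ref{Hall_isom} the algebra $\HQ$ is generated by the classes of indecomposable representations, and by Theorem \ref{ind_tree} these are precisely the $R_T$ for $T$ a connected subtree of $\Q$. Once $\rho'$ is shown surjective, surjectivity of $\rho$ follows because $\on{Im}(\rho) \supseteq \on{Im}(\rho') = \HQ$ and $\rho(h_i) = Z_i$ supplies the additional Cartan-type generators of $\HQe$.

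Induct on $|T|$. The base case $|T| = 1$ is immediate, since $R_T = S_i = \rho'(e_i)$. For the inductive step pick a leaf $i \in T$ with unique neighbor $j$, set $T' := T \setminus \{i\}$ (still connected since $i$ is a leaf), and consider the two products $[S_i]\cdot[R_{T'}]$ and $[R_{T'}]\cdot[S_i]$. Any representation $R$ appearing in either has $\dimvv(R) = \dimvv(R_T)$, so by Theorems \ref{KS_thm} and \ref{ind_tree} it decomposes as a direct sum of indecomposables $R_{T_k}$ indexed by a partition of the vertex set of $T$ into connected subtrees. Because $R_{T'}$ (resp.\ $S_i$) is an indecomposable subrepresentation of $R$, property (5) of Section \ref{repQF1} forces it to be contained in a single Krull-Schmidt summand $R_{T_k}$; comparing dimension vectors pins $T_k$ to be either $T$ or $T'$ (resp.\ either $T$ or $\{i\}$). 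Hence the only isomorphism classes of $R$ that can occur in the expansion are $R = R_T$ and $R = R_{T'} \oplus S_i$.

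Because all edge-maps of $R_T$ are isomorphisms and all relevant spaces are one-dimensional, a direct count of subrepresentations shows that $[R_{T'} \oplus S_i]$ appears with coefficient $1$ in both products, whereas $[R_T]$ appears with coefficient $1$ in exactly one of the two products and $0$ in the other, the choice being dictated by the orientation of the edge between $i$ and $j$. For example, if this edge is oriented $j \to i$, then no subrepresentation of $R_T$ is isomorphic to $R_{T'}$, since the restriction of the isomorphism $f_h \colon V_j \to V_i$ to $V_j$ would be forced to land in the zero subspace at $i$; in the opposite orientation the roles interchange. Therefore
\[
[R_T] \;=\; \pm\bigl([S_i]\cdot[R_{T'}] \;-\; [R_{T'}]\cdot[S_i]\bigr),
\]
which lies in $\on{Im}(\rho')$ by the induction hypothesis, completing the proof.

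The main obstacle is the orientation-sensitive structural step: showing that exactly the two isomorphism classes $R_T$ and $R_{T'} \oplus S_i$ contribute and identifying their coefficients. The decisive ingredient is the splitting of subrepresentations of direct sums (property (5) of Section \ref{repQF1}), combined with Krull-Schmidt, which together force the indecomposable subrepresentation $R_{T'}$ or $S_i$ to live inside a single summand of $R$ and reduce the possibilities to those two; the orientation-dependent calculation of the coefficient of $[R_T]$ is then a direct inspection using that the spaces involved are one-dimensional and the maps of $R_T$ are isomorphisms.
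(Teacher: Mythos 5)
Your proof is correct and follows essentially the same route as the paper: induction on the size of the connected subtree, removing a leaf $i$ and realizing $[R_T]$ as $\pm\left[[S_i],[R_{T'}]\right]$. The only difference is that you justify the key commutator identity in detail (via the splitting property of subrepresentations of direct sums and the orientation-dependent count), whereas the paper simply asserts it.
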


\begin{proof}
It suffices to show that $\rho': \U(\n_+) \rightarrow \HQ$ is surjective, and for this, that every indecomposable representation of $\Q$ lies in $\CQ$. We know from theorem \ref{ind_tree} that indecomposable representations of $\Q$ correspond to connected subgraphs (i.e. sub-trees) of $\Q$. Let $\overline{\Q}' \subset \overline{\Q}$ be such a subgraph, and let $i \in \I$ be an extremal vertex of $\overline{\Q}'$ (a vertex lying at the end of branch of $\overline{\Q}'$). Removing $i$ from $\overline{\Q}'$ results in a connected sub-tree $\overline{\Q}'' \subset \overline{\Q}' \subset \overline{\Q}$. Let $\V_{\Q''}, \V_{\Q'}$ be the indecomposable representations of $\Q$ corresponding to the subgraphs $\overline{\Q}', \overline{\Q}''$. In the Hall algebra $\HQ$ we have the relation $$  \left[ [S_i], [\V_{\Q''}] \right] = \pm [\V_{\Q'}],  $$ where the sign depends on the orientation of the quiver. The theorem now follows by induction on the number of vertices of $\overline{\Q}'$. 
\end{proof}

\section{The Jordan Quiver} \label{JQ}

In this section, we consider the example of the Jordan quiver $\Q_0$, consisting of a single vertex $i$ and a single loop $h: i \rightarrow i$. 
For a treatment of the Hall algebra of $\on{Rep}(\Q_0, \mathbb{F}_q$) see \cite{S}. We will restrict ourselves to the category $\RepQnot$ of nilpotent representations of $Q_0$. An object of $\RepQnot$ consists of a pair $(V_i, f_h)$ where $ V_i  \in \VFun $ and $f_h : V_i \rightarrow V_i$ is nilpotent. By the decomposition theorem \ref{Jordan}, the indecomposable objects of $\RepQnot$ are exactly the $\N_m$ introduced in section \ref{norm_form}, with $\N_1$ the only simple object. A general representation is a direct sum 
\begin{equation} \label{obj}
\N^{\oplus n_1}_1 \oplus \N^{\oplus n_2}_2 \oplus \ldots \oplus \N^{\oplus n_r}_r,
\end{equation}
for non-negative integers $n_1, \ldots, n_r$, and so we may identify isomorphism class \ref{obj} with the partition
$$\lambda = (1^{n_1} 2^{ n_2} \cdots r^{ n_r}).$$ 
We will also use the notation $\lambda=(\lambda_1, \lambda_2, \ldots, 0, 0, \ldots)$ for partitions, which exhibits lambda as a non-increasing sequence of  non-negative integers, i.e. $\lambda_1 \geq \lambda_2 \geq \ldots$. 
By theorem \ref{Hall_isom} we have that $\mathbf{H}_{\Q_0} \simeq \U(\n_{\Q_0})$, where is $\n_{\Q_0}$ is the Lie algebra spanned by $[\N_i], \; i \geq 1$. We have in $\mathbf{H}_{\Q_0}$
\begin{equation} \label{symm_rel}
[\N_i][\N_j] = [\N_i \oplus \N_j] + [\N_{i+j}]
\end{equation}
from which it follows that $\n_{\Q_0}$ is abelian and $\mathbf{H}_{\Q_0}$ therefore commutative. Let $\mathbf{\Lambda}$ denote the ring of symmetric functions with complex coefficients. One of the standard bases for $\mathbf{\Lambda}$ consists of the monomial symmetric functions $m_{\lambda}$. These are defined as follows:
\[
m_{\lambda} = \sum_{\alpha} x^{\alpha}
\]
where the summation is over all distinct permutations $\alpha$ of the entries $(\lambda_1, \lambda_2, \ldots, 0 , 0 , 0 \ldots)$ of the partition $\lambda$, and we are using multi-index notation. It is well-known (see for instance \cite{St}) that for $i < j$ 
\[
m_{(i)} m_{(j)} = m_{(j, i)} + m_{(i+j)}
\]
and that $\mathbf{\Lambda}$ is freely generated by the $m_{(i)}, i \in \mathbb{N}$ (these are called the power sum symmetric functions). We thus obtain a surjective ring homomorphism
\[
\mathbf{\Lambda} \rightarrow \mathbf{H}_{\Q_{0}}
\]
by sending $m_{(i)}$ to $[\N_i]$, which is easily seen to be an isomorphism by noting that the graded dimension of both rings is the same. $\mathbf{\Lambda}$ is also known to be a Hopf algebra, whose primitive elements are exactly the power sum symmetry functions. 
We have thus proved

\begin{theorem}
The Hall algebra $\mathbf{H}_{\Q_0}$ is isomorphic as a Hopf algebra to $\mathbf{\Lambda}$, the ring of symmetric functions. 
\end{theorem}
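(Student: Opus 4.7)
The plan is to define the homomorphism explicitly on the free polynomial generators $m_{(i)}$ and then verify three things in turn: that it is a well-defined algebra map, that it is a bijection, and finally that it intertwines the coproducts.

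First, I would define $\Phi: \mathbf{\Lambda} \to \mathbf{H}_{\Q_0}$ on generators by $\Phi(m_{(i)}) = [\N_i]$. Since $\mathbf{\Lambda} \simeq \mathbb{C}[m_{(1)}, m_{(2)}, \ldots]$ is freely generated as a commutative algebra by the $m_{(i)}$, and since by relation \ref{symm_rel} the Lie algebra $\n_{\Q_0}$ is abelian so that $\mathbf{H}_{\Q_0} \simeq \U(\n_{\Q_0})$ is commutative, the assignment extends uniquely to an algebra homomorphism.

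Next, I would check surjectivity and injectivity. For surjectivity, by Theorem \ref{Hall_isom} the algebra $\mathbf{H}_{\Q_0}$ is generated by indecomposable classes, namely the $[\N_i]$, which lie in the image of $\Phi$. For injectivity, I would compare graded dimensions: the classes $[\N^{\oplus n_1}_1 \oplus \cdots \oplus \N^{\oplus n_r}_r]$ form a basis of $\mathbf{H}_{\Q_0}$ indexed by partitions $\lambda = (1^{n_1} 2^{n_2} \cdots r^{n_r})$, with the grading by $\sum i n_i$; on the other side, $\mathbf{\Lambda}$ has a basis $\{m_\lambda\}$ also indexed by partitions, graded by $|\lambda|$. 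Since $\Phi$ is a graded, surjective map between graded vector spaces of equal (finite) dimension in each degree, it is an isomorphism of algebras.

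Finally, to upgrade this to a Hopf algebra isomorphism, I would observe that in each ring, the polynomial generators chosen above are primitive: on the Hall side, $[\N_i]$ is primitive because $\N_i$ is indecomposable (as noted in the paper, the coproduct \ref{coprod} makes $[M]$ primitive precisely when $M$ is indecomposable); on the symmetric function side, $m_{(i)} = p_i$ is a power sum symmetric function, which is well known to be primitive with respect to the standard Hopf structure on $\mathbf{\Lambda}$. Since both Hopf algebras are, as algebras, polynomial rings on these primitive generators, and $\Phi$ carries primitive generators to primitive generators, $\Phi$ must commute with the coproducts (because a bialgebra structure on a free commutative algebra is determined by the images of a set of algebra generators, and the counit/antipode are then automatically matched). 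The main step requiring care here is simply the identification of $m_{(i)}$ with $p_i$ and the observation that $\mathbf{\Lambda} = \mathbb{C}[p_1, p_2, \ldots]$ freely; there is no genuine obstacle, as the heavy lifting was already done in establishing \ref{symm_rel} and the Milnor--Moore presentation of $\mathbf{H}_{\Q_0}$.
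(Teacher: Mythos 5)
Your proposal is correct and follows essentially the same route as the paper: define the map on the free generators $m_{(i)} \mapsto [\N_i]$, use commutativity of $\mathbf{H}_{\Q_0}$ (from relation \ref{symm_rel}) and generation by indecomposables for surjectivity, compare graded dimensions over partitions for injectivity, and match the Hopf structures via primitivity of the power sums $m_{(i)} = p_i$ and of the classes $[\N_i]$.
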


\section{The quiver of type $A$} \label{typeA}

Let $\Q$ be the quiver of type $A_n$, with arbitrary orientation, such as the case of $A_4$ show in the figure: 
 \setlength{\unitlength}{1pt}
 \begin{center}
 \begin{picture}(130,40)
 \put(0,20){\circle*{5}}
 \put(40,20){\circle*{5}}
 \put(80,20){\circle*{5}}
 \put(120,20){\circle*{5}}
\put(3,20){\vector(1,0){34}}
 \put(43,20){\vector(1,0){34}}
\put(117,20){\vector(-1,0){34}}
  \end{picture}
 \end{center}
We recall that if $\kk$ is a field, the indecomposable representations of $\on{Rep}(\Q,\kk)$ have dimension vectors in which every entry is either $1$ or $0$, and where moreover, we can take all non-zero edge maps to be the identity. If $\V$ is such a representation, then
\[
\V = \V' \otimes_{\Fun} \kk
\]
where $\V' \in \RepQ$ is an indecomposable representation.  Thus  for type A quivers, the functor $()\otimes_{\Fun} \kk $ is a bijection on indecomposable representations. 

\begin{theorem} \label{typeA}
When $\Q$ is a quiver of type $A$, the homomorphism $\rho: \U(\b') \rightarrow \HQe$ defined in section \ref{rho} is an isomorphism.
\end{theorem}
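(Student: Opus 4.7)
The plan is to factor the question through the vector-space decompositions $\U(\b') \simeq \mathbb{C}[h_1,\ldots,h_r] \otimes \U(\n_+)$ and $\HQe \simeq \on{Sym}(\h_{\Q}) \otimes_{\mathbb{C}} \HQ$, on which $\rho$ acts factor-wise by construction. For type $A_n$ the generalized Cartan matrix agrees with the ordinary Cartan matrix of $\mathfrak{sl}_{n+1}$, which is invertible; combined with the defining relations $Z_i(\alpha_j) = a_{ji}$, this forces $\{Z_i\}$ to be a basis of $\h_{\Q}$, so the Cartan piece $h_i \mapsto Z_i$ extends immediately to an isomorphism $\mathbb{C}[h_1,\ldots,h_r] \to \on{Sym}(\h_{\Q})$. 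The theorem thus reduces to showing that the restriction $\rho': \U(\n_+) \to \HQ$ is an isomorphism.

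Surjectivity of $\rho'$ is already in hand, since a type $A$ quiver is a tree, and the previous theorem then applies. For injectivity I would argue by comparing graded dimensions: both algebras are $\mathbb{Z}_{\geq 0}^{|\I|}$-graded with finite-dimensional graded components, so a graded surjection between them is an isomorphism if and only if their two graded characters coincide.

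Both characters admit clean product formulas. Krull--Schmidt (Theorem \ref{KS_thm}) gives a basis of $\HQ$ indexed by multisets of indecomposable representations, yielding
\[
\on{ch}(\HQ) = \prod_{\V \textrm{ indec.}} \frac{1}{1 - x^{\dimvv(\V)}},
\]
while PBW applied to the Kac--Moody $\n_+$ of finite type $A_n$ gives
\[
\on{ch}(\U(\n_+)) = \prod_{\alpha \in \Delta^+} \frac{1}{1 - x^{\alpha}},
\]
each positive root appearing with multiplicity one. For type $A_n$ the positive roots are exactly the intervals $\alpha_i + \cdots + \alpha_j$ for $1 \le i \le j \le n$, and by Theorem \ref{ind_tree} the indecomposables of $\RepQ$ correspond to connected subgraphs of $\Q$, which for type $A$ are precisely these intervals, with matching dimension vectors. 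The two products therefore agree term by term.

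The point requiring the most care is verifying that this bijection between indecomposables and positive roots actually intertwines the two gradings under the identification $[S_i] \leftrightarrow \alpha_i$; Theorem \ref{ind_tree} does the heavy lifting here, which is why the argument can be essentially combinatorial rather than requiring Hall polynomials or the delicate extension-counting used over $\mathbb{F}_q$. Once the characters are matched, $\rho'$ is a graded surjection between spaces of equal graded dimension, hence an isomorphism, and combined with the Cartan isomorphism this upgrades to $\rho: \U(\b') \to \HQe$ being an isomorphism of Hopf algebras.
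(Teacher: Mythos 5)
Your proposal is correct and follows essentially the same strategy as the paper: surjectivity of $\rho'$ coming from the tree structure (the paper redoes the induction $[[S_i],[\V']] = \pm[\V]$ directly rather than citing the tree theorem), followed by a dimension count that matches indecomposables of $\RepQ$ with the positive roots of $A_n$. The only difference is one of packaging — the paper performs the count at the Lie-algebra level, noting that $\n_+$ and $\n_{\Q}$ are finite-dimensional of equal dimension, whereas you compare graded characters of the enveloping algebras via PBW and Krull--Schmidt; the two are equivalent.
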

\begin{proof}
By the comment following the proof of theorem \ref{rho_theorem}, it suffices to prove that the corresponding Lie algebra homomorphism
\[
\rho_{Lie} : \b' \rightarrow \b_Q
\]
is an isomorphism. It is clear that $\h$ maps isomorphically to $\h_Q$, so we need to check that the restriction
\[
\rho'_{Lie} : \n_+ \rightarrow \n_Q
\]
is an isomorphism. Since $\n_+$ and $\n_{\Q}$ have the same dimension, it suffices to show that $\rho'_{Lie}$ is surjective, i.e that all $[M]$ with $M$ indecomposable lie in the image of $\rho'_{Lie}$. We proceed by induction on the dimension of $M$. The simples $S_j$ are in the image, and suppose that $\V$ is an indecomposable in $\RepQ$ such that if $\W \in \RepQ$ is indecomposable and $\dim(\W) < \dim(\V)$, then $\W \in \on{Im}(\rho'_{Lie})$. 
The support of $\V$ is connected, and so must be a consecutive chain of vertices. Let $i$ be one of the two extreme vertices. There is then exactly one non-zero edge-map $f_e$ going to/from $i$. Let $\V' \in \RepQ$ be the representation obtained by removing the one-dimensional $V_i$ and setting $f_e=0$. Then $\V'$ is also indecomposable and therefore $[\V'] \in \on{Im}(\rho'_{Lie})$, and we have 
\[
[[S_i], [\V']] = \pm \V
\]    
(depending on whether $e$ is directed towards or away from $i$). Thus $[\V] \in \on{Im}(\rho'_{Lie}) $ as desired. 
\end{proof} 
 
 \begin{remark}
 It follows from theorem \ref{typeA} that the $[M]$ form an integral basis for $\U(\n_+)$. By construction, they have the property that the product of two elements of this basis $[M]\cdot[N]$ is a linear combination of other basis elements with positive integer coefficients. 
 \end{remark}
 
\section{The cyclic quiver} \label{cyclic}

In this section we consider the example of the equi-oriented cyclic quiver of type $A^{(1)}_{n}$, of which the case $n=4$ is pictured below :

 \setlength{\unitlength}{1pt}
 \begin{center}
 \begin{picture}(130,80)
 \put(0,20){\circle*{5}}
 \put(40,20){\circle*{5}}
 \put(80,20){\circle*{5}}
 \put(120,20){\circle*{5}}
 \put(60,80){\circle*{5}}
\put(37,20){\vector(-1,0){34}}
 \put(77,20){\vector(-1,0){34}}
\put(117,20){\vector(-1,0){34}}
\put(62,78){\vector(1,-1){56}}
\put(2,22){\vector(1,1){56}}
\put(63,78){0}
\put(120,9){4}
\put(80,9){3}
\put(40,9){2}
\put(0,9){1}
  \end{picture}
 \end{center}

As in the type $A$ case above, the situation is essentially identical to that over a field. 
We identify the vertices with residues in $\znz$, and denote the edges by pairs $i,i-1$, so that $f_{i,i-1}: V_i \rightarrow V_{i-1}$. Starting with a nilpotent representation $\V \in \RepQn$ we can form $\mathbf{V} := \oplus_{i \in \znz} V_i$. The edge maps $f_e, e \in \E$ assemble to give a nilpotent endomorphism $x \in \on{End}(\mathbf{V})$, which is equivalent to $\oplus \N_m$. It follows from this that the indecomposable representations in $\RepQn$ are of the form $\Ii_{[k,r]}$, where
\[
(\Ii_{[k,r]})_j = \bigoplus_{\substack{ 0 \leq s \leq r-1 \\ k-s \equiv j}} \{ e_{k-s}, 0 \} 
\]
and
\[
f_{j,j-1} (e_{p}) = e_{p-1} \textrm{ if } p \equiv j
\]
$\Ii_{[k,r]}$ can therefore be visualized as the $r$--dimensional representation which starts at $k$ and winds around the quiver. As over a field, the objects of $\RepQn$ can therefore be identified with $n$--tuples of partitions by the correspondence
\[
(\underline{\lambda_1}, \ldots, \underline{\lambda_n}) \rightarrow \bigoplus_{i \in \znz} \bigoplus_{p} \Ii_{[i,\lambda^j_p]}
\]
The following identity holds in $\HQ$:
\begin{equation}
[\Ii_{[i,p]}] \cdot [\Ii_{[j,q]}] = \left \{ \begin{array}{ll} \left[\Ii_{[j, p+q]}\right] + \left[ \Ii_{[i,p]} \oplus \Ii_{[j,q]} \right] & \textrm{ if } j+q \equiv i \on{mod} n \\
 \left[ \Ii_{[i,p]} \oplus \Ii_{[j,q]} \right] & \textrm{ otherwise } \end{array} \right.
\end{equation} 
from which it follows that in $\n_{\Q}$ we have:
\begin{equation} \label{comm_rel}
\left[ [\Ii_{[i,p]}] , [\Ii_{[j,q]}] \right] = \left \{ \begin{array}{ll}  [ \Ii_{[j, p+q]} ] & \textrm{ if } j+q \equiv i \on{mod} n \\ 0 & \textrm{ otherwise }  \end{array} \right. 
\end{equation}

\bigskip
Let us recall (see eg. \cite{Kac}) that given a complex Lie algebra $\g$ with an invariant symmetric form $\langle, \rangle$, we may form the corresponding affine algebra $$\hat{\g} = \g[t,t^{-1}] \oplus \mathbb{C} c$$ with bracket $$[x \otimes t^n, y \otimes t^m] = [x,y] \otimes t^{n+m} + \langle x, y \rangle n \delta_{n,-m} c.$$ The Lie algebra $\mathfrak{gl}_n$ of $n \times n$ matrices carries the invariant form $\langle x, y \rangle = \on{tr}(xy)$, and so we may form $\hat{\mathfrak{gl}}_n$. $\hat{\mathfrak{gl}}_n$ has a triangular decomposition $$ \agln = \a_- \oplus \h_n \oplus \a_+, $$ where
$\a_+ = t \mathfrak{gl}_n [t] \oplus N_+$, $\a_-= t^{-1} \mathfrak{gl}_n[t^{-1}] \oplus N_-$, $\h_n = D \oplus \mathbb{C}c$, and $N_+, N_-, D$ denote the upper-triangular, lower-triangular, and diagonal matrices in $\mathfrak{gl}_n$ respectively. 

Since $\HQ \simeq \U(\n_{\Q})$, there exists and isomorphism $\HQ \rightarrow \HQ^{op}$ induced by the automorphism $x \rightarrow -x$ at the level of Lie algebras. We denote by $\n^{op}_{\Q}$ the isomorphic Lie algebra whose bracket is the opposite of \ref{comm_rel}. Let 
\begin{align} \label{psi_hom}
\nonumber \psi: & \a_+ \rightarrow \n^{op}_{\Q} \\
\psi(E_{r,s} \otimes t^m  ) & = \left \{ \begin{array}{ll} \Ii_{[r,s-r + mn]} & \textrm{ if } s \geq r \\ \Ii_{[r,n+s-r + mn]} & \textrm{ if } s < r \end{array} \right. 
\end{align}
where $r$ on the right is understood as a residue in $\znz$ (i.e $r=n$ gets mapped to the residue $0$). 

\begin{theorem}
$\psi$ is a Lie algebra isomorphism
\end{theorem}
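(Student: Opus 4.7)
The plan is to verify two things in sequence: first, that $\psi$ is a linear bijection on basis vectors; second, that it intertwines the Lie brackets. Once both are checked, the theorem follows.

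For the linear isomorphism, I would work directly with bases. A basis of $\a_+$ is given by $\{E_{r,s} : 1 \leq r < s \leq n\} \cup \{E_{r,s} \otimes t^m : 1 \leq r,s \leq n,\ m \geq 1\}$, while a basis of $\n^{op}_{\Q}$ consists of the classes $[\Ii_{[j,\ell]}]$ for $(j,\ell) \in \znz \times \mathbb{Z}_{\geq 1}$. To construct an inverse of $\psi$, given $(j,\ell)$ I would choose the representative $r \in \{1,\ldots,n\}$ of $j$, let $s \in \{1,\ldots,n\}$ be the unique representative of $r+\ell \pmod{n}$, and then recover $m$ from the defining formula, using $m = (\ell - (s-r))/n$ when $s \geq r$ and $m = (\ell - (n+s-r))/n$ when $s < r$. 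A short check shows the resulting triple $(r,s,m)$ lies in the basis of $\a_+$ and that $\psi$ sends it back to $\Ii_{[j,\ell]}$, giving the bijection.

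For the bracket compatibility, the key simplification is that in $\a_+$ the central extension term of $\hat{\mathfrak{gl}}_n$ drops out: since both loop degrees $m,m'$ are non-negative the factor $\delta_{m,-m'}$ forces $m=m'=0$, and in that case both factors lie in $N_+$ whose commutator remains in $N_+$ with no central contribution. Thus the bracket in $\a_+$ reduces to the matrix commutator
\[
[E_{r,s} \otimes t^m,\ E_{r',s'} \otimes t^{m'}] = (\delta_{s,r'}\, E_{r,s'} - \delta_{r,s'}\, E_{r',s}) \otimes t^{m+m'}.
\]
On the other side, the opposite of \eqref{comm_rel} reads $[[\Ii_{[i,p]}], [\Ii_{[j,q]}]]_{op} = [\Ii_{[i,p+q]}]$ precisely when $i + p \equiv j \pmod{n}$, and $0$ otherwise. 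The crucial observation is that if $L$ denotes the length of $\psi(E_{r,s} \otimes t^m)$, then a direct check shows $r + L \equiv s \pmod{n}$ in both sub-cases of the defining formula. Hence the ``extension condition'' $r + L \equiv r' \pmod{n}$ in $\n^{op}_{\Q}$ is equivalent to $s = r'$, matching exactly the first Kronecker delta in the matrix commutator; the analogous statement with roles of the two factors swapped accounts for the second Kronecker delta and its opposite sign.

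The main obstacle is the case analysis needed to verify that, when $s = r'$, the length of $\psi(E_{r,s'} \otimes t^{m+m'})$ equals the sum $L + L'$ of the two input lengths. The $\psi$-formula contains a shift by $n$ whenever its column index is less than its row index, so one must distinguish cases according to the signs of $s-r$, $s'-s$ and $s'-r$. In each sub-case the wrap-around summand of $n$ must be tracked carefully: precisely the situations in which exactly one of the three input/output relations triggers a wrap-around are the delicate ones, and one checks by direct arithmetic that the wrap-around shifts cancel exactly. Once the length addition is verified, the antisymmetry of the matrix commutator reproduces the $0$ bracket values in the remaining cases, finishing the proof that $\psi$ is a Lie algebra homomorphism, and combined with the bijection step one obtains the isomorphism.
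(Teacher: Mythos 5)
Your overall strategy coincides with the paper's: check the bracket relation on basis elements and then observe bijectivity on bases. The organizational points you add --- the congruence $r+L\equiv s\pmod n$ that converts the extension condition in $\n^{op}_{\Q}$ into the Kronecker delta $\delta_{s,r'}$, the vanishing of the central term on $\a_+$, and the explicit inverse --- are sound and in fact more careful than the printed proof, which only displays the case $r\le s$, $r'\le s'$ and then asserts bijectivity.

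However, the step you defer (``one checks by direct arithmetic that the wrap-around shifts cancel exactly'') does not close if you use the formula for $\psi$ exactly as printed. Take $n=4$ and the bracket $[E_{1,3},\,E_{3,2}\otimes t]=E_{1,2}\otimes t$. The printed formula gives $\psi(E_{1,3})=[\Ii_{[1,2]}]$ (so $L=2$) and $\psi(E_{3,2}\otimes t)=[\Ii_{[3,\,4+2-3+4]}]=[\Ii_{[3,7]}]$ (so $L'=7$), hence the bracket in $\n^{op}_{\Q}$ produces $[\Ii_{[1,9]}]$, whereas $\psi(E_{1,2}\otimes t)=[\Ii_{[1,5]}]$: the lengths agree only modulo $n$, not on the nose. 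The same defect breaks your inverse construction: for $(j,\ell)=(3,3)$ your recipe returns $E_{3,2}\otimes t^{0}$, which does not lie in $\a_+$, and indeed $[\Ii_{[3,3]}]$ is not in the image of the printed $\psi$. The source of the trouble is an off-by-$n$ shift in the second branch of the definition: for $s<r$ (which forces $m\ge 1$ for an element of $\a_+$) the length should be $s-r+mn$, not $n+s-r+mn$; equivalently $\psi(E_{r,s}\otimes t^{m})=[\Ii_{[r,\,s-r+mn]}]$ holds uniformly on $\a_+$. With that correction there are no wrap-around shifts to cancel, the length additivity $L''=L+L'$ is immediate, and your inverse automatically lands in $\a_+$ (one checks $m\ge 1$ whenever $s\le r$). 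So your argument is the right one, but the ``direct arithmetic'' you postpone is precisely the point at which the definition must be repaired; carried out literally against the printed formula, that step fails rather than succeeds.
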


\begin{proof}
We first check that $\psi$ is a Lie algebra homomorphism by direct calculation. For instance, if $r \leq s$ and $r' \leq s'$, then we have
\begin{align*}
\left[ \psi(E_{r,s} \otimes t^m), \psi(E_{r',s'} \otimes t^{m'})  \right] &= \delta_{s,r'} \psi( E_{r,s'} \otimes t^{m+m'} ) + \delta_{s', r} \psi(  E_{r',s} \otimes t^{m+m'} )  \\
& = \delta_{s,r'} [ \Ii_{[r,s'-r + (m+m')n]}] + \delta_{s',r} [\Ii_{[r',s-r'+ (m+m')n]}] \\
& = \left[ [\Ii_{r,s-r + mn}], \Ii_{[r',s'-r' + m'n]} \right]
\end{align*}
The cases $r \leq s, r' > s'$ and $r > s, r' > s' $ are treated similarly. Now, since the $E_{r,s} \otimes t^m$ (resp. $[\Ii_{k,l}]$) form bases for $\a_+$  (resp. $\n_{\Q}$),  it follows from the definition \ref{psi_hom}  that it takes $\a_+$ isomorphically onto $\n_{\Q}$. 

\end{proof}

We therefore obtain that $\HQ \simeq \HQ^{op} \simeq \U(\a_+)$. Denoting by $\n_+$ the nilpotent half of $\hat{\mathfrak{sl}}_{n-1}$, and using the fact that the positive simple root spaces generate $\n_+$ inside of $\a_+$ as a Lie algebra, we conclude that the Hopf algebra homomorphism $\rho' : \U(\n_+) \rightarrow \HQ$ is injective, and therefore so is $\rho$. To summarize:

\begin{corollary}
$\HQ \simeq \U(\a_+)$, and $\rho: \U(\b') \rightarrow \HQe$ is injective. 
\end{corollary}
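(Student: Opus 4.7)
The corollary bundles two separate assertions, both of which follow by assembling results already established.

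\emph{For $\HQ \simeq \U(\a_+)$:} The plan is to compose three Hopf algebra isomorphisms. Theorem~\ref{Hall_isom} supplies $\HQ \simeq \U(\n_{\Q})$. Any Lie algebra is isomorphic to its opposite via $x \mapsto -x$, yielding $\U(\n_{\Q}) \simeq \U(\n^{op}_{\Q})$. Finally, applying the functor $\U$ to the Lie algebra isomorphism $\psi: \a_+ \to \n^{op}_{\Q}$ of the preceding theorem produces $\U(\a_+) \simeq \U(\n^{op}_{\Q})$. Stringing these three together gives the required Hopf algebra isomorphism.

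\emph{For injectivity of $\rho$:} Here the plan is to descend to the Lie algebra level. Since $\HQe \simeq \U(\b_{\Q})$ by the construction in Section~\ref{HQe}, and $\rho$ is induced by a Lie algebra homomorphism $\rho_{Lie}: \b' \to \b_{\Q}$, the PBW theorem implies that $\rho$ is injective iff $\rho_{Lie}$ is. Decomposing $\b' = \bigoplus_i \mathbb{C} h_i \oplus \n_+$ and $\b_{\Q} = \h_{\Q} \oplus \n_{\Q}$ respects the $K_0$-grading (the Cartan pieces sit in degree $0$ and the nilpotent pieces in strictly nonzero degree), so it is enough to check injectivity on each factor separately. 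The map $h_i \mapsto Z_i$ on the Cartan part is injective because the $Z_i$ are dual to the simple roots via $Z_i(\alpha_j) = a_{ji}$ and are therefore linearly independent in $\h_{\Q}$. Thus the problem reduces to proving that $\rho'_{Lie}: \n_+ \to \n_{\Q}$ is injective, where $\n_+$ is the nilpotent half of the associated affine simple Lie algebra $\hat{\mathfrak{sl}}_n$.

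For this last step, I would transport via the isomorphism $\n_{\Q} \simeq \n^{op}_{\Q} \simeq \a_+$ produced in the first half, obtaining a composite Lie algebra map $\widetilde{\rho}: \n_+ \to \a_+$. By construction $\rho'_{Lie}(e_i) = [S_i] = [\Ii_{[i,1]}]$; unwinding the explicit formula \eqref{psi_hom} (together with the sign flip from $\n^{op}_{\Q} \simeq \n_{\Q}$) identifies $\widetilde{\rho}(e_i)$ with the standard simple root generator of $\hat{\mathfrak{sl}}_n^+$ inside $\a_+$. Since the $e_i$ generate $\n_+$ subject to the Serre relations, and their images in $\a_+$ generate $\hat{\mathfrak{sl}}_n^+$ while satisfying exactly those relations, $\widetilde{\rho}$ factors as the Serre-presentation isomorphism $\n_+ \xrightarrow{\sim} \hat{\mathfrak{sl}}_n^+$ followed by the inclusion $\hat{\mathfrak{sl}}_n^+ \hookrightarrow \a_+$, both of which are injective.

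The main obstacle, modest but genuine, is the explicit identification of $\widetilde{\rho}(e_i)$ for the affine simple root. For the non-affine nodes one reads off $[S_i] \leftrightarrow E_{i,i+1}$ directly from the $s \geq r$ branch of \eqref{psi_hom}, but the affine node forces the wrap-around branch $s < r$, and one must also keep the sign introduced by $\n^{op}_{\Q} \simeq \n_{\Q}$ straight. Once that matching is verified, injectivity is a formal consequence of the universal property of the Serre presentation of $\n_+$.
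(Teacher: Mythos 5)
Your proof of the first assertion, $\HQ \simeq \U(\a_+)$, is exactly the paper's: compose Theorem \ref{Hall_isom} with the opposite-algebra isomorphism $x \mapsto -x$ and with $\U(\psi)$. Your treatment of the nilpotent part is also the paper's argument: transport $\rho'_{Lie}$ through $\psi$, match the $[S_i]$ with the affine simple root vectors, and use the fact that these generate a copy of the Serre-presented $\n_+$ inside $\a_+$, so that $\rho': \U(\n_+) \to \HQ$ is injective. Up to this point the two proofs coincide.

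The gap is in your Cartan step. You assert that the $Z_i$ are linearly independent in $\h_{\Q}$ because $Z_i(\alpha_j) = a_{ji}$; but linear independence of the $Z_i$ is equivalent to nonsingularity of the Cartan matrix $A$, and for the cyclic quiver $A$ is of affine type and hence singular. Concretely, every row of $A$ sums to zero (each vertex of the cycle has exactly two incident edges), so $\sum_i Z_i = 0$ in $\h_{\Q}$, whereas $\sum_i h_i = c$ is the nonzero canonical central element of $\b'$. Thus $\rho_{Lie}$ kills $c$, and your claimed injectivity on the degree-zero piece fails. This is not a defect only of your write-up: the paper passes from ``$\rho'$ is injective'' to ``therefore so is $\rho$'' without comment, and your more careful degree-by-degree decomposition in fact exposes that this deduction requires nondegeneracy of $A$, which fails precisely in the affine case under consideration. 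What your argument (and the paper's) genuinely establishes is injectivity of $\rho'$ on $\U(\n_+)$; injectivity on all of $\U(\b')$ would require modifying the degree-zero part of the target so that the images of the $h_i$ remain linearly independent.
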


\section{Further directions} \label{further}

We collect in this section some missing pieces, and make some comments regarding future directions. 

\begin{enumerate}
\item In the study of quiver representations over a field $\kk$, an important role is played by the Euler form
\[
\left<M,N\right>_a = \sum^{\infty}_{i=1} (-1)^i \on{Ext}^{i}(M,N) = \on{Hom}(M,N) - \on{Ext}^{1}(M,N),
\]
where the last simplification occurs because the $\on{Rep}(\Q, \kk)$ is hereditary.
In particular, it allows to recover the natural inner product on the root lattice of the Kac-Moody algebra $\g(\overline{Q})$ , which appears as $K_0 (\on{Rep}(\Q, \kk))_{nil}$. $K_0 (\RepQn )$ has the right size even over $\Fun$ (see corollary \ref{K0}), but it is not altogether clear whether the Euler form makes sense. 
We can adapt the Yoneda definition of $\on{Ext}$ to define $\on{Ext}^i$ of representations in $\RepQ$, but it is not immediately clear that $\RepQ$ is hereditary, or that Euler form descends to $K_0$. This seems to require the creation of rudimentary homological tools "over $\Fun$". 
\item One would also like to compare the categories $\RepQ$ for different orientations of the quiver. The definition of reflection functors does not seem to go through naively however, since their definition requires one to make sense of the notion of sum $\sum_{e'' = i} f_{e}$ of maps in $\VFun$ having a fixed source/target. 
\item The category $\RepQn$ can sometimes be completely characterized by the poset of submodules of a given module, in which case it is equivalent to an \emph{incidence category} \cite{Sz1}. This leads to a very simple and elementary combinatorial description of the Hall algebra $\HQ$. This theme and several examples are explored in the companion note \cite{Sz2}. 
\item Recently, much beautiful work has been done on Hall algebras $\mathbf{H}(Coh(X, \mathbb{F}_q))$  of the category of coherent sheaves on an algebraic curve $X$ over $ \mathbb{F}_q$ (see \cite{Kap2, S1, S2, S3} for instance ). Using the Dietmar's notion of $\Fun$--scheme \cite{D}, one can make sense of the Hall algebra of coherent sheaves on $\mathbb{P}^1 / \Fun$. One then obtains a $q=1$ version of Kapranov's result (\cite{Kap2}) regarding this object, namely that $\mathbf{H}(Coh(\mathbb{P}^1, \Fun))$ is isomorphic to a non-standard Borel subalgebra in $\hat{\mathfrak{sl}}_2$. The details will appear in \cite{Sz3}. 
\item The category $\RepQ$ shares many structural similarities with incidence categories \cite{Sz1} as well as the category of Feynman graphs \cite{KS}. It may be useful to develop the systematics of "$\Fun$--linear" categories. 
\end{enumerate}

\newpage

\end{document}